\documentclass[a4paper,reqno]{amsart}
\usepackage[utf8]{inputenc}
\usepackage[T1]{fontenc}
\usepackage{amssymb,mathabx,xy}
\usepackage{amsmath,amsthm}
\xyoption{all}
\usepackage{hyperref}
\usepackage{mathbbol}

\newenvironment{tfae}
{
	\begin{enumerate}}
		{\end{enumerate}}

\newtheorem{theorem}[subsection]{Theorem}
\newtheorem{lemma}[subsection]{Lemma}
\newtheorem{proposition}[subsection]{Proposition}
\newtheorem{corollary}[subsection]{Corollary}

\theoremstyle{definition}

\newtheorem{definition}[subsection]{Definition}

\def\noproof{\hfill\qed}

\numberwithin{equation}{section}

\DeclareMathOperator{\Ker}{Ker}
\DeclareMathOperator{\Img}{Im}
\DeclareMathOperator{\ab}{ab}
\DeclareMathOperator{\length}{length}

\renewcommand{\ll}{\mathfrak{l}}
\newcommand{\mm}{\mathfrak{m}}
\newcommand{\nn}{\mathfrak{n}}
\renewcommand{\gg}{\mathfrak{g}}
\newcommand{\hh}{\mathfrak{h}}

\newcommand{\rr}{\mathfrak{r}}
\newcommand{\pp}{\mathfrak{p}}

\newcommand{\K}{\mathbb{K}}
\newcommand{\N}{\mathbb{N}}

\newdir{>}{{}*:(1,-.2)@^{>}*:(1,+.2)@_{>}}
\newdir{<}{{}*:(1,+.2)@^{<}*:(1,-.2)@_{<}}

\begin{document}

\author{Guram Donadze}
\author{Tim Van~der Linden}

\email[G.~Donadze]{gdonad@gmail.com}
\email[T.~Van~der Linden]{tim.vanderlinden@uclouvain.be}

\address[G.~Donadze]{Institute of Cybernetics of Georgian Tech Univ, Sandro Euli Str: 5, Tbilisi 0186 Georgia}
\address[T.~Van~der Linden]{Institut de Recherche en Math\'ematique et Physique, Universit\'e catholique de Louvain, chemin du cyclotron~2 bte~L7.01.02, 1348 Louvain-la-Neuve, Belgium}
\thanks{The first author was financially supported by Shota Rustaveli National Science Foundation of Georgia, grant FR-22-199. The second author is a Senior Research Associate of the Fonds de la Recherche Scientifique--FNRS}

\title[Schur- and Baer-type theorems for Lie and Leibniz algebras]{Schur- and Baer-type theorems\\ for Lie and Leibniz algebras}

\begin{abstract}
	The aim of this article is to obtain variations on the classical theorems of Schur and Baer on finiteness of commutator subgroups, valid in the contexts of Lie algebras and Leibniz algebras over a field. Using non-abelian tensor products and exterior products, we prove Schur's Theorem for finitely generated Leibniz algebras, both Schur's Theorem and Baer's Theorem for finitely generated Lie algebras, and a version of these theorems for finitely presented Lie algebras.
\end{abstract}

\subjclass[2020]{17A32, 17B55, 18G10, 18G45, 18G50}

\keywords{Lie algebra, Leibniz algebra, homology, central extension, crossed module, non-abelian tensor and exterior product}

\maketitle

\section{Introduction}\label{S:In}

Schur's Theorem~\cite{Schur1904,DKP2015} states that, for a group $G$ and $C \leq Z(G)$, if $G/C$ is a finite group, then the derived subgroup $G'$ is finite. Baer~\cite{Baer1952} extended this by proving that, if for a natural number $i$ the quotient $G/Z_i(G)$ is finite, then $\gamma_{i+1}(G)$ is finite. This result is commonly known as Baer's Theorem.

Several kinds of variations of these results, for groups and for other types of algebraic objects, exist in the literature. Closely related to our current work are, for instance, a version of Schur's theorem for Lie algebras obtained in~\cite{Moneyhun}: here the author proves that if $\ll/Z(\ll)$ is finite-dimensional, then so is the commutator ideal $\ll'$. An analogue of Baer's theorem for Lie algebras occurs in~\cite{SEA}: if $\ll/Z_i(\ll)$ is finite-dimensional, then so is $\gamma_{i+1}(\ll)$.

Our ultimate aim is to work towards a categorical approach to such results, using the tensor product of~\cite{dMVdL19.3}, which captures the non-abelian tensor product introduced by Brown and Loday \cite{Brown-Loday,BL1984}, as well as, for instance, Ellis's version of it for Lie algebras~\cite{El0}.

The present article is a preliminary step in this direction. We use techniques from homological algebra, with in particular the non-abelian tensor products (as well as the closely related exterior products) of Lie and Leibniz algebras respectively introduced in~\cite{El0} and in~\cite{Gn99} to prove Schur's Theorem for finitely generated Leibniz algebras, both Schur's Theorem and Baer's Theorem for finitely generated Lie algebras, and a version of these theorems for finitely presented Lie algebras.

The text is structured as follows. In Section~\ref{S:Leibniz} we consider the case of Leibniz algebras. After the technically important Theorem~\ref{theorem1}, here the main result is a Schur-type theorem, which says that given a finitely generated Leibniz algebra $\mm$, the commutator $[\mm, \mm]$ is finitely generated if and only if for every central extension $0\to \rr \to \ll \to \mm \to 0$ of Leibniz algebras, the commutator $[\ll, \ll]$ is finitely generated (Theorem~\ref{ThSchurLeibniz}). 

This is used to obtain two kinds of Schur/Baer-type result for Lie algebras. In Section~\ref{S:LieFinGen} we consider the case of finitely generated Lie algebras. We first prove Theorem~\ref{ThmTensorGamma}, which says that if $\mm$ is a finitely generated Lie algebra and $i$ a natural number, then $\gamma_{i+1}(\mm)$ is finitely generated if and only if so is $\mm^{\otimes i+1}$. This is then used in Theorem~\ref{theorem3.11} stating that if $\mm$ is a finitely generated Lie algebra and $i$ a natural number, then $\gamma_{i+1}(\mm)$ is finitely generated if and only if for every $i$-central extension $0 \to \rr \to \ll \to \mm \to 0$ of Lie algebras, $\gamma_{i+1}(\ll)$ is finitely generated. Section~\ref{S:LieFinPres} treats similar results for \emph{finitely presented} Lie algebras: Theorem~\ref{ThmTensorGammaFinPres} and Theorem~\ref{Thm4.3}, respectively.

Throughout the text, we let $\K$ be a field, and consider Lie and Leibniz algebras with respect to this field.

\section{The Case of Leibniz Algebras}\label{S:Leibniz}

\subsection{Leibniz actions.}

Actions correspond to split extensions via a semidirect product construction. In the setting of Leibniz	algebras, it is convenient to view them as follows~\cite{Gn99}.

\begin{definition}
	Let $\mm$ and $\nn$ be Leibniz algebras. A \emph{Leibniz action} of $\mm$ on
	$\nn$ is a couple of bilinear maps $\mm \times \nn \to \nn\colon (m, n)\mapsto {}^mn$ and
	$\nn \times \mm \to \nn\colon(n, m)\mapsto n^m$ satisfying
	\begin{align*}
		^{[m, m']}n  & ={}^m({}^{m'}n)+({}^mn)^{m'} & {}^m[n, n'] & =[{}^mn, n']-[{}^mn', n] \\
		n^{[m, m']}  & =(n^m)^{m'}-(n^{m'})^{m}     & [n, n']^m   & =[n^m, n']+[n, n'^m]     \\
		^m({}^{m'}n) & =-^m(n^{m'})                 & [n, {}^mn'] & =-[n, n'^{m}]
	\end{align*}
	for all $m$, $m'\in \mm$, $n$, $n'\in \nn$.
\end{definition}

The non-abelian tensor product (see~\ref{Nonab Tensor} below) is defined for two objects acting on each other. When such actions are compatible in the sense of the following definition, this means~\cite{Gn99} that they are induced by a pair of Leibniz crossed modules $\mm\to\gg$, $\nn\to\gg$ with common codomain $\gg$. This interpretation of what is a pair of compatible actions happens to be a general categorical fact (Theorem 4.11 in~\cite{dMVdL19}).

\begin{definition}[\cite{Gn99}]
	Let $\mm$ and $\nn$ be two Leibniz algebras acting on each other. We say these actions are \emph{compatible} if
	\begin{align*}
		^{({}^mn)}m'   & = [m^n, m']   & {}^{({}^nm)}n' & = [n^m, n']    \\
		^{(n^m)}m'     & = [{}^nm, m'] & {}^{(m^n)}n'   & = [{}^mn, n']  \\
		m^{({}^{m'}n)} & = [m, m'^n]   & n^{({}^{n'}m)} & = [n, n'^{m}]  \\
		m^{(n^{m'})}   & =[m, {}^nm']  & n^{(m^{n'})}   & = [n, {}^mn']
	\end{align*}
	for $m$, $m' \in \mm$ and $n$, $n' \in \nn$.
\end{definition}

\subsection{The non-abelian tensor product.}\label{Nonab Tensor}
Let $\mm$ and $\nn$ be Leibniz algebras with mutual actions on one another.
The \emph{non-abelian tensor product} of $\mm$ and $\nn$, denoted by $\mm \star \nn$, is defined in \cite{Gn99} to be
the Leibniz algebra generated by the symbols $m*n$ and $n*m$, for all $m\in \mm$ and $n\in \nn$, subject to the relations in Table~\ref{Table Tensor Relations}.
\begin{table}
	\tiny
	\begin{align*}
		(1\textrm{a})\quad & k(m*n)=km*n=m*kn                         & (1\textrm{b})\quad & k(n*m)=kn*m=n*km                         \\
		(2\textrm{a})\quad & (m+m')*n=m*n+m'*n                        & (2\textrm{b})\quad & (n+n')*m=n*m+n'*m                        \\
		(2\textrm{c})\quad & m*(n+n')=m*n+m*n'                        & (2\textrm{d})\quad & n*(m+m')=n*m+n*m'                        \\
		(3\textrm{a})\quad & m*[n, n']=m^n*n'-m^{n'}*n                & (3\textrm{b})\quad & n*[m, m']=n^m*m'-n^{m'}*m                \\
		(3\textrm{c})\quad & [m, m']*n={}^mn*m'-m*n^{m'}              & (3\textrm{d})\quad & [n, n']*m={}^nm*n'-n*m^{n'}              \\
		(4\textrm{a})\quad & m*{}^{m'}n=-m*n^{m'}                     & (4\textrm{b})\quad & n*{}^{n'}m=-n*m^{n'}                     \\
		(5\textrm{a})\quad & m^n*{}^{m'}n'=[m*n, m'*n']={}^mn*m'^{n'} & (5\textrm{b})\quad & ^nm*n'^{m'}=[n*m, n'*m']=n^m*{}^{n'}m'   \\
		(5\textrm{c})\quad & m^n*n'^{m'}=[m*n, n'*m']={}^mn*{}^{n'}m' & (5\textrm{d})\quad & ^nm*{}^{m'}n'=[n*m, m'*n']=n^m* m'^{n'}
	\end{align*}
	\caption{Relations of $\mm\star\nn$, valid for all $k\in \K$, $m$, $m'\in \mm$, $n$, $n'\in \nn$.}\label{Table Tensor Relations}
\end{table}
There are induced homomorphisms of Leibniz algebras $\tau_{\mm} \colon \mm \star \nn \to \mm$ and $\tau_{\nn} \colon \mm \star \nn \to \nn$ where $\tau_{\mm}(m * n) = m^n$, $\tau_{\mm}(n * m) = {}^nm$, $\tau_{\nn}(m * n) = {}^mn$ and $\tau_{\nn}(n * m) = n^m$.

\begin{theorem} \label{theorem1} Let $\mm$ and $\nn$ be finitely generated Leibniz algebras acting on each other
	compatibly. Then, $\mm \star \nn$ is finitely generated iff $\tau_{\mm}(\mm \star \nn)$ and
	$\tau_{\nn}(\mm \star \nn)$ are finitely generated.
\end{theorem}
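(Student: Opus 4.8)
The plan is to prove both implications separately. The forward implication is immediate: if $\mm\star\nn$ is finitely generated, then $\tau_{\mm}(\mm\star\nn)$ and $\tau_{\nn}(\mm\star\nn)$ are finitely generated, being homomorphic images of a finitely generated Leibniz algebra under $\tau_{\mm}$ and $\tau_{\nn}$. For the converse, write $A=\tau_{\mm}(\mm\star\nn)$ and $B=\tau_{\nn}(\mm\star\nn)$ and assume both are finitely generated. The first observation is that relations~(5a)--(5d) in Table~\ref{Table Tensor Relations} express the bracket of any two generators of $\mm\star\nn$ again as a \emph{single} generator; since the bracket is bilinear, $\mm\star\nn$ is then spanned, as a vector space, by the generators $m*n$ and $n*m$, and its derived ideal $D=[\mm\star\nn,\mm\star\nn]$ is spanned by the products $a*b$ and $b*a$ with $a\in A$ and $b\in B$ (the four shapes in~(5) show each such product is a bracket, and conversely each bracket has this form). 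I would then exploit the two-term filtration $0\leq D\leq \mm\star\nn$: showing that the quotient $(\mm\star\nn)/D$ is finite-dimensional and that $D$ is finitely generated as an ideal, and concluding by the standard fact that an extension of a finitely generated algebra by a finitely generated ideal is finitely generated.

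To handle $Q=(\mm\star\nn)/D$, fix finite generating sets $x_{1},\dots,x_{p}$ of $\mm$, $y_{1},\dots,y_{q}$ of $\nn$, $a_{1},\dots,a_{r}$ of $A$ and $b_{1},\dots,b_{s}$ of $B$. Using bilinearity together with relations~(3a)--(3d) and~(4a)--(4b), every generator $m*n$ can be rewritten so as to lower the bracket-length of one tensor factor, at the cost of replacing the other factor by an action term $m^{n}$, ${}^{m}n$, and so on, which lies in $A$ or $B$. The crucial point is that modulo $D$ all \emph{doubly reduced} products, in which both factors already lie in the images $A$ and $B$, vanish; a double induction on the bracket-lengths of the two factors then collapses $Q$ onto the span of the finitely many classes of $x_{i}*y_{j}$, $y_{j}*x_{i}$, $a_{k}*y_{j}$, $y_{j}*a_{k}$, $x_{i}*b_{l}$ and $b_{l}*x_{i}$. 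Hence $Q$ is finite-dimensional, which is the clean half of the argument.

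The main obstacle is the finite generation of $D$. Here the compatibility of the actions is essential: it is what guarantees that $A$ and $B$ act on one another, so that $D$ should be realised as a homomorphic image of the non-abelian tensor product $A\star B$ of the (finitely generated) images, with $D$ generated as an ideal by the finite set $\{a_{k}*b_{l},\,b_{l}*a_{k}\}$. The delicate issue is controlling the action terms produced by relations~(3a)--(3d) when reducing an arbitrary product $a*b$ with $a\in A$ and $b\in B$: a priori the actions $a^{b}$ and ${}^{a}b$ may have unbounded bracket-length in $A$ and $B$, so the naive length-based reduction ping-pongs between the two factors and need not terminate. Finite generation of $A$ and $B$, fed back through relations~(5a)--(5d) — which re-express precisely these problematic products as brackets of generators lying in the subalgebra generated by the chosen finite set — is what I expect to close the induction and to produce a finite generating set for $D$. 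Combining this with the finite-dimensionality of $Q$ then yields that $\mm\star\nn$ is finitely generated.
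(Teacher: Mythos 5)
You have correctly identified the crux of the converse, but you have not resolved it, and resolving it is the entire content of the paper's proof. The difficulty you name --- that reducing the bracket-length of one tensor factor via relations (3a)--(3d) replaces the other factor by an action term of uncontrolled length, so that the reduction ``ping-pongs'' and need not terminate --- is real, and relations (5a)--(5d) do not close the induction by themselves: they only rewrite a bracket of two \emph{generators} as a single generator, and they say nothing about an arbitrary $a*b$ with $a\in A=\tau_{\mm}(\mm\star\nn)$ and $b\in B=\tau_{\nn}(\mm\star\nn)$. The missing idea is a specific use of compatibility: fix a finite generating set $M_2=\{x_1^{y_1},\dots,x_s^{y_s},{}^{z_1}w_1,\dots,{}^{z_t}w_t\}$ of $A$ and define $\sigma\colon M_2\to\nn$ by $\sigma(x_i^{y_i})={}^{x_i}y_i$ and $\sigma({}^{z_j}w_j)=z_j^{w_j}$; compatibility then gives $[m,b]=m^{\sigma(b)}$ and $[b,m]={}^{\sigma(b)}m$ for all $m\in\mm$ and $b\in M_2$. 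This converts every bracket against a generator of $A$ into an action by an element of the \emph{fixed finite set} $\sigma(M_2)\subseteq\nn$, so throughout the reduction the second tensor factor remains inside $N_1\cup\sigma(M_2)$ and a single well-founded induction on the length of the first factor terminates. Without $\sigma$ (or an equivalent device) your induction has no decreasing measure, so the proposal as it stands has a genuine gap exactly where you say you ``expect'' it to close.

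Two secondary points. The paper does not quotient by $D=[\mm\star\nn,\mm\star\nn]$ but by the kernel of $\mm\star\nn\to(\mm/A)\star(\nn/B)$; that quotient is finitely generated for a trivial reason (the induced actions are trivial, so relations (3a)--(3d) annihilate all brackets in the factors), whereas the finite-dimensionality of your $Q=(\mm\star\nn)/D$ is itself subject to the same non-terminating reduction and is not the ``clean half'' you claim. Moreover, the ``standard fact'' you invoke needs $D$ to be finitely generated as an algebra; finite generation as an ideal together with finite generation of the quotient does not suffice in general (an infinite-dimensional simple Lie algebra that is not finitely generated is generated as an ideal of itself by any nonzero element).
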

\begin{proof} We will prove the ``if'' part of the theorem. The other part is trivial.
	
	We have natural morphisms $(\tau_{\mm}(\mm \star \nn)) \star \nn \to \mm \star \nn$ and
	${\mm \star (\tau_{\nn}(\mm \star \nn)) \to \mm \star \nn}$, whose images we will respectively denote by
	$(\tau_{\mm}(\mm \star \nn))\star \nn$ and $\mm \star (\tau_{\nn}(\mm \star \nn))$. Since the actions are compatible, $\tau_{\mm}(\mm \star \nn)$ and $\tau_{\nn}(\mm \star \nn)$ are ideals of $\mm$ and $\nn$,
	respectively. We have the following exact sequence.
	$$
		0 \to \big((\tau_{\mm}(\mm \star \nn)) \star \nn\big) \vee \big(\mm \star (\tau_{\nn}(\mm \star \nn))\big) \to
		\mm \star \nn \to \frac{\mm}{\tau_{\mm}(\mm \star \nn)} \star \frac{\nn}{\tau_{\nn}(\mm \star \nn)} \to 0
	$$
	Let $M_1=\{m_1, \ldots, m_k\}$ be a set of generators of $\mm$ and $N_1=\{n_1, \ldots, n_l\}$ a set of generators of $\nn$.
	Since $\mm/\tau_{\mm}(\mm \star \nn)$ and $\nn/\tau_{\nn}(\mm \star \nn)$ act on each other trivially,
	$\big(\mm/\tau_{\mm}(\mm \star \nn)\big)\star\big(\nn/\tau_{\mm}(\mm \star \nn)\big)$ will be generated
	by $\big(m_i+\tau_{\mm}(\mm \star \nn)\big)*\big(n_j+\tau_{\nn}(\mm \star \nn)\big)$ and
	$\big(n_j+\tau_{\nn}(\mm \star \nn)\big)*\big(m_i+\tau_{\mm}(\mm \star \nn)\big)$, for $i=1, \ldots , k$ and $j=1, \ldots, l$.
	Taking into account the exact sequence mentioned above, it suffices to show that
	\begin{enumerate}
		\item there is a finitely generated $\gg$ such that
		      $(\tau_{\mm}(\mm \star \nn))\star \nn \subset \gg \subseteq \mm \star \nn$;
		\item there is a finitely generated $\hh$ such that
		      $\mm \star (\tau_{\nn}(\mm \star \nn)) \subset \hh \subseteq \mm \star \nn$.
	\end{enumerate}
	We will prove only (1). The proof of (2) is similar to that of (1) and left to the reader.
	
	Let $M_2=\{x_1^{y_1}, \ldots, x_s^{y_s}, {}^{z_1}w_1, \ldots, {}^{z_t}w_t\}$ be a set of generators for $\tau_{\mm }(\mm \star \nn)$.
	Define a map $\sigma\colon M_2 \to \nn$ by
	$\sigma (x_i^{y_i})={}^{x_i}y_i$, $\sigma ({}^{z_j}w_j)=z_j^{w_j}$. Since the actions are compatible, we have:
	\begin{align}\label{new1}
		[m, b]=m^{\sigma(b)} \qquad \text{and} \qquad [b, m]={}^{\sigma(b)}m ,
	\end{align}
	for each $m\in \mm$ and $b\in M_2$. We need one more map, $\alpha\colon M_2 \to \mm \star \nn$, determined by
	$\alpha (x_i^{y_i})=x_i*y_i$, $\alpha ({}^{z_j}w_j)=z_j*w_j$. Consider the following subset of $\mm \star \nn$:
	$$
		X=\{m*n \mid n\in N_1\cup \sigma (M_2), \: m\in M_2\} \cup \{n*m \mid n\in N_1\cup \sigma (M_2), \: m\in M_2\} \cup
		\alpha (M_2).
	$$
	Denote by $\gg$ the subalgebra of $\mm \star \nn$, generated by the elements of $X$.
	We will prove that $(\tau_{\mm}(\mm \star \nn )) \star \nn \subset \gg$.
	By the defining relations of the non-abelian tensor product it suffices to show that $m*n$, $n*m \in \gg$,
	for each $m\in \tau_{\mm}(\mm \star \nn)$ and $n\in N_1$. We prove this by induction, as follows.
	
	Let $m\in \tau_{\mm}(\mm \star \nn)$ be an element which can be factored into a Leibniz product of elements from $M_2$.
	Then, for $i\in \N$, we write $\length(m)\leq i$, if $m$ can be factored into a Leibniz product of $i$ (not necessarily distinct)
	elements from $M_2$. Moreover, if such $m$ can not be factored into a Leibniz product of fewer
	elements from $M_2$, then we write $\length(m)=i$.
	
	Let $b$, $d\in M_2$ and $n\in \nn$. Then,
	\begin{align*}
		[b,d]*n  & ={}^{b}n*d-b*n^d = [b*n, \alpha(d)]-[\alpha(b), n*d], \\
		n*[b, d] & = n^b *d-n^d*b = [n*b, \alpha(d)]-[n*d, \alpha(b)].
	\end{align*}
	This implies that if $\length(m) \leq 2$, then $m*n$, $n*m\in \gg$ for all $n\in N_1\cup \sigma(M_2)$. This is the base step of the induction. 
	Now assume the induction hypothesis that $m*n$, $ n*m\in \gg$ for all $m\in \tau_{\mm}(\mm \star \nn)$ and $n$ such that
	$n\in N_1\cup \sigma(M_2)$ and $\length(m) \leq i-1$, where $i\geq 3$.
	Suppose that $m'\in \tau_{\mm}(\mm \star \nn)$, $\length(m')=i$. Then $m'$ is one of $[[c, d],b]$, $[[d, c], b]$, $[b, [c,d]]$ or $[b, [d,c]]$,	where $b$, $d\in M_2$, $c\in \tau_{\mm}(\mm \star \nn)$ and $\length(c)=i-2$.
	By (\ref{new1}) and by the defining relations of the non-abelian tensor product, we have:
	\begin{align*}
		[[c, d], b]* n & ={}^{[c, d]}n*b - [c, d]*n^{b} = [[c,d]*n, \alpha(b)]-c^{\sigma(d)}* n^b \\
		               & =[[c,d]*n, \alpha(b)]-[c*\sigma(d), n*b];
	\end{align*}
	\begin{align*}
		[[d, c], b]* n & ={}^{[d, c]}n*b - [d, c]*n^{b} = [[d,c]*n, \alpha(b)]- {}^{\sigma(d)}c* n^b \\
		               & =[[d,c]*n, \alpha(b)]-[\sigma(d)*c, n*b];
	\end{align*}
	\begin{align*}
		[b, [c,d]]* n & ={}^{b}n*[c,d] - b*n^{[c,d]}={}^{b}n*c^{\sigma(d)} - b*n^{[c,d]} \\
		              & =[b*n, c*\sigma(d)]-[\alpha(b), n*[c,d]];
	\end{align*}
	\begin{align*}
		[b, [d,c]]* n & ={}^{b}n*[d,c] - b*n^{[d,c]}={}^{b}n*{}^{\sigma(d)}c - b*n^{[d,c]} \\
		              & =[b*n, \sigma(d)*c]-[\alpha(b), n*[d,c]];
	\end{align*}
	\begin{align*}
		n*[[c,d], b] & = n^{[c,d]}*b-n^{b}*[c,d]=[n*[c,d], \alpha(b)]-n^{b}*c^{\sigma(d)} \\
		             & =[n*[c,d], \alpha(b)]-[n*b,c*\sigma(d)];
	\end{align*}
	\begin{align*}
		n*[[d,c], b] & = n^{[d,c]}*b-n^{b}*[d,c]=[n*[d,c], \alpha(b)]-n^{b}* {}^{\sigma(d)}c \\
		             & =[n*[d,c], \alpha(b)]-[n*b, \sigma(d)* c];
	\end{align*}
	\begin{align*}
		n*[b, [c, d]] & = n^b*[c, d]-n^{[c, d]}*b= n^b*c^{\sigma(d)}-n^{[c, d]}*b \\ 
		              & = [n*b, c*\sigma(d)]-[n*[c,d], \alpha(b)];
	\end{align*}
	\begin{align*}
		n*[b, [d, c]] & = n^b*[d, c]-n^{[d, c]}*b=n^b*{}^{\sigma(d)}c-n^{[d, c]}*b \\
		              & =[n*b, \sigma(d)*c]- [n*[d, c], \alpha(b)].
	\end{align*}
	Since $\length(c)$, $\length([c,d])$, $\length([d,c])\leq i-1$,
	from the induction hypothesis it follows that $m'*n$, $n*m'\in \gg$ for each $n\in N_1\cup \sigma(M_2)$ and
	$\length(m')=i$. This proves our claim.
\end{proof}

\begin{corollary} Let $\mm$ be a finitely generated Leibniz algebra. Then $\mm \star \mm$ is finitely generated
	iff the Leibniz commutator $[\mm, \mm]$ is finitely generated.\noproof
\end{corollary}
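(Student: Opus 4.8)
The plan is to deduce the Corollary from Theorem~\ref{theorem1} by specialising to the \emph{self-action} of $\mm$: take $\nn = \mm$ and let $\mm$ act on itself through its own bracket, ${}^m n := [m,n]$ and $n^m := [n,m]$ for $m$, $n \in \mm$. Since $\mm$ is finitely generated, so is $\nn = \mm$; hence, once I check that this pair of actions meets the hypotheses of Theorem~\ref{theorem1}, that theorem applies directly, provided I can identify the images $\tau_{\mm}(\mm\star\mm)$ and $\tau_{\nn}(\mm\star\mm)$ with the commutator $[\mm,\mm]$.

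First I would verify that ${}^m n = [m,n]$, $n^m = [n,m]$ genuinely defines a Leibniz action of $\mm$ on itself. Substituting these formulas into the six defining identities of a Leibniz action turns the first four of them into instances of the Leibniz identity $[[x,y],z] = [[x,z],y] + [x,[y,z]]$, after a harmless relabelling of variables. The remaining two, namely ${}^m({}^{m'}n) = -{}^m(n^{m'})$ and $[n,{}^m n'] = -[n,n'^{m}]$, both reduce to the single assertion that $[a, [b,c]+[c,b]] = 0$ for all $a$, $b$, $c \in \mm$; this follows by setting $y = z$ in the Leibniz identity to get $[a,[x,x]]=0$, and then polarising. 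Next I would check compatibility of the two (identical) actions. Here the verification is immediate: under the self-action every term ${}^x y$ and $y^x$ is literally a bracket, so each of the eight compatibility identities collapses to an equality of two identical nested brackets; for instance ${}^{({}^m n)}m' = [[m,n],m']$ on both sides.

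Finally I would identify the relevant images. Since $\tau_{\mm}(m*n) = m^n = [m,n]$ and $\tau_{\mm}(n*m) = {}^n m = [n,m]$, and symmetrically for $\tau_{\nn}$, both $\tau_{\mm}(\mm\star\mm)$ and $\tau_{\nn}(\mm\star\mm)$ are precisely the subalgebra of $\mm$ generated by all brackets, that is, $[\mm,\mm]$. Theorem~\ref{theorem1} then yields that $\mm\star\mm$ is finitely generated if and only if both of these images are, which is exactly the condition that $[\mm,\mm]$ be finitely generated, as required. I expect the only genuine work to lie in verifying the Leibniz-action axioms, and within that specifically the two antisymmetry identities above: these are the only steps that are not purely formal relabellings, resting as they do on the characteristic Leibniz-algebra fact that left multiplication annihilates symmetrised brackets. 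Everything else—compatibility and the computation of the $\tau$-images—is essentially bookkeeping, which is no doubt why the authors leave the Corollary without a written proof.
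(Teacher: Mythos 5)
Your proof is correct and is precisely the argument the paper intends: the corollary is the specialisation of Theorem~\ref{theorem1} to the adjoint self-action of $\mm$ (so $\nn=\mm$, ${}^mn=[m,n]$, $n^m=[n,m]$), under which both images $\tau_{\mm}(\mm\star\mm)$ and $\tau_{\nn}(\mm\star\mm)$ equal $[\mm,\mm]$. The verifications you spell out---the action axioms via the Leibniz identity and the fact that left multiplication kills symmetrised brackets, the tautological compatibility, and the identification of the images---are exactly the routine checks the authors suppress by leaving the corollary without a written proof.
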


\begin{theorem}\label{ThSchurLeibniz} Let $\mm$ be a finitely generated Leibniz algebra. Then the following are equivalent:
	\begin{tfae}
		\item $[\mm, \mm]$ is finitely generated;
		\item there exists a central extension $0\to \rr \to \ll \to \mm \to 0$ of Leibniz algebras, where $[\ll, \ll]$ is finitely generated;
		\item for every central extension $0\to \rr \to \ll \to \mm \to 0$ of Leibniz algebras, $[\ll, \ll]$ is finitely generated.
	\end{tfae}
\end{theorem}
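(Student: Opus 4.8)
\section*{Proof proposal}

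The plan is to prove the cycle (i) $\Rightarrow$ (iii) $\Rightarrow$ (ii) $\Rightarrow$ (i), with all the work concentrated in the first implication. For (iii) $\Rightarrow$ (ii) I would simply note that central extensions of $\mm$ exist at all---the identity extension $0 \to 0 \to \mm \to \mm \to 0$ is one---so the universal statement (iii) yields the existential statement (ii). For (ii) $\Rightarrow$ (i), given a central extension $0 \to \rr \to \ll \to \mm \to 0$ with surjection $\pi\colon \ll \to \mm$ and with $[\ll, \ll]$ finitely generated, I would use that surjectivity of $\pi$ forces $\pi([\ll, \ll]) = [\mm, \mm]$; since a homomorphic image of a finitely generated Leibniz algebra is finitely generated, $[\mm, \mm]$ is finitely generated.

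For (i) $\Rightarrow$ (iii), the first step is to feed (i) into the Corollary: applied to $\mm$, it gives that $\mm \star \mm$ is finitely generated. Now I would fix an arbitrary central extension $0 \to \rr \to \ll \to \mm \to 0$ with surjection $\pi$, and construct a surjective homomorphism of Leibniz algebras $\phi\colon \mm \star \mm \to [\ll, \ll]$ exhibiting $[\ll, \ll]$ as a quotient of the finitely generated algebra $\mm \star \mm$. To define $\phi$, choose a lift $\tilde m \in \ll$ of each $m \in \mm$ and put $\phi(x * y) = [\tilde x, \tilde y]$ on generators. Because $\rr = \Ker \pi$ is central in $\ll$, this is independent of the chosen lifts: replacing $\tilde x$, $\tilde y$ by $\tilde x + r$, $\tilde y + s$ with $r$, $s \in \rr$ leaves $[\tilde x, \tilde y]$ unchanged, as $[\tilde x, s] = [r, \tilde y] = [r, s] = 0$.

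The crux is to verify that $\phi$ respects the defining relations of Table~\ref{Table Tensor Relations}, read with $\nn = \mm$ and the adjoint action ${}^a b = [a, b]$, $b^a = [b, a]$. I expect the scalar and additivity relations (1)--(2) and the relations (4)--(5) to translate into immediate bracket identities, while the relations (3) should become exactly instances of the Leibniz identity of $\ll$; for instance, the image of (3c) is the rearranged identity $[[\tilde m, \tilde m'], \tilde n] = [[\tilde m, \tilde n], \tilde m'] - [\tilde m, [\tilde n, \tilde m']]$. Granting this, $\phi$ is a homomorphism, and it is surjective since $[\ll, \ll]$ is the $\K$-span of the brackets $[x, y]$ with $x$, $y \in \ll$ and each such bracket equals $\phi(\pi(x) * \pi(y))$. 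As $[\ll, \ll]$ is then a homomorphic image of the finitely generated algebra $\mm \star \mm$, it is finitely generated, which is (iii).

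The main obstacle is precisely the construction and verification of $\phi$: its well-definedness on generators is forced by centrality of $\rr$, and the homomorphism property requires running through every relation in Table~\ref{Table Tensor Relations}, each of which I anticipate reducing---as in the sample above---to bilinearity, the Leibniz identity, and the vanishing of brackets against central elements. It is worth stressing why one proceeds through $\mm \star \mm$ rather than $\ll \star \ll$: the algebra $\ll$ need not be finitely generated, since the central kernel $\rr$ may be large, so the Corollary cannot be applied to $\ll$ directly; the surjection out of the finitely generated $\mm \star \mm$ is exactly what makes the bound on $[\ll, \ll]$ available.
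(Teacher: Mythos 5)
Your proposal is correct and follows essentially the same strategy as the paper: both reduce (i)$\Rightarrow$(iii) to exhibiting $[\ll,\ll]$ as a homomorphic image of $\mm\star\mm$, which is finitely generated by the Corollary. The only difference is how the surjection $\mm\star\mm\to[\ll,\ll]$ is obtained --- the paper gets it abstractly from the right-exact sequence $0\to(\rr\star\ll)\vee(\ll\star\rr)\to\ll\star\ll\to\mm\star\mm\to 0$ together with the fact that $\tau_{\ll}$ vanishes on $(\rr\star\ll)\vee(\ll\star\rr)$ by centrality, whereas you construct the same map explicitly via lifts and verify the relations of Table~\ref{Table Tensor Relations} by hand (your anticipated reductions to bilinearity, the Leibniz identity, and centrality do all go through).
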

\begin{proof} The implications (iii)$\Rightarrow$(ii)$\Rightarrow$(i) are obvious, so let us show that (i) implies (iii). Consider a central extension as in (iii). We have a short exact sequence
	of Leibniz algebras
	\begin{align}\label{new2}
		0\to (\rr \star \ll) \vee (\ll \star \rr) \to \ll \star \ll \to \mm \star \mm \to 0 ,
	\end{align}
	where $\rr \star \ll$ and $\ll \star \rr$ are identified with their images into $\ll \star \ll$.
	Observe that $\tau_\mm (\mm \star \mm)=[\mm, \mm]$. Moreover, since the extension $0\to \rr \to \ll \to \mm \to 0$
	is central, we have that $\tau_\mm ((\rr \star \ll) \vee (\ll \star \rr))=0$. By (\ref{new2}) we get a surjection $\mm \star \mm \to [\ll, \ll]$. By the previous corollary, $\mm \star \mm$ is finitely
	generated. This proves the theorem.
\end{proof}

\section{On Finitely Generated Lie Algebras}\label{S:LieFinGen}

\subsection{Lie actions}
In the case of Lie algebras, the description of an action and of the tensor product of a pair of algebras acting on each other (compatibly) simplifies as follows~\cite{El0}.

\begin{definition}
	Let $\mm$ and $\nn$ be Lie algebras. A \emph{Lie action} of $\mm$ on
	$\nn$ is a bilinear map $\mm \times \nn \to \nn\colon (m, n)\mapsto {}^mn$, satisfying
	\begin{align*}
		^{[m, m']}n={}^m({}^{m'}n)-{}^{m'}({}^{m}n)
		\qquad\text{and}\qquad
		{}^m[n, n']=[{}^mn, n']+[n, {}^mn']
	\end{align*}
	for each $m$, $m'\in \mm$, $n$, $n'\in \nn$.
\end{definition}

\begin{definition}
	Let $\mm$ and $\nn$ be two Lie algebras acting on each other. We say these actions are \emph{compatible} when
	\begin{align*}
		^{({}^mn)}m' = [m', {}^nm]
		\qquad\text{and}\qquad
		{}^{({}^nm)}n' = [n', {}^mn]
	\end{align*}
	for all $m$, $m' \in \mm$ and $n$, $n' \in \nn$.
\end{definition}

\begin{definition}
	Let $\mm$ and $\nn$ be Lie algebras with mutual actions on one another.
	The \emph{non-abelian tensor product} of $\mm$ and $\nn$, denoted by $\mm \otimes \nn$, is defined to be
	the Lie algebra generated by the symbols $m\otimes n$, for all $m\in \mm$ and $n\in \nn$, subject to
	\begin{align*}
		k(m\otimes n)              & =km\otimes n=m\otimes kn             \\
		(m+m')\otimes n            & = m\otimes n+m'\otimes n             \\
		m\otimes (n+n')            & = m\otimes n + m\otimes n'           \\
		m\otimes [n, n']           & ={}^{n'}m\otimes n-^nm\otimes n'     \\
		[m, m']\otimes n           & = m\otimes{}^{m'}n - m'\otimes {}^mn \\
		[m\otimes n, m'\otimes n'] & =-{}^nm \otimes {}^{m'}n'
	\end{align*}
	for each $k\in \K$, $m$, $m'\in \mm$, $n$, $n'\in \nn$.
\end{definition}

There are induced morphisms of Lie algebras $\mu\colon \mm \otimes \nn \to \mm$ and ${\nu\colon \mm \otimes \nn \to \nn}$, defined on generators by
$\mu (m \otimes n)=-{}^nm$ and ${\nu(m \otimes n)= {}^mn}$.

\begin{theorem} \label{theorem2} Let $\mm$ and $\nn$ be finitely generated Lie algebras acting on each other
	compatibly. Then $\mm \otimes \nn$ is finitely generated iff $\mu (\mm \otimes \nn)$ and $\nu (\mm \otimes \nn)$ are finitely generated.
\end{theorem}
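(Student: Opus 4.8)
The plan is to mirror the proof of Theorem~\ref{theorem1}, exploiting the fact that the Lie tensor product has a single action and fewer defining relations. The ``only if'' direction is immediate, since $\mu(\mm\otimes\nn)$ and $\nu(\mm\otimes\nn)$ are homomorphic images of $\mm\otimes\nn$; so I would concentrate on the ``if'' direction.

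First I would record that, by compatibility, $\mu(\mm\otimes\nn)$ is an ideal of $\mm$ and $\nu(\mm\otimes\nn)$ is an ideal of $\nn$; indeed ${}^nm=-\mu(m\otimes n)$ shows that $\mu(\mm\otimes\nn)$ is even closed under the action of $\nn$. The natural morphisms $\mu(\mm\otimes\nn)\otimes\nn\to\mm\otimes\nn$ and $\mm\otimes\nu(\mm\otimes\nn)\to\mm\otimes\nn$ then give a short exact sequence
\[
0\to\bigl(\mu(\mm\otimes\nn)\otimes\nn\bigr)\vee\bigl(\mm\otimes\nu(\mm\otimes\nn)\bigr)\to\mm\otimes\nn\to\tfrac{\mm}{\mu(\mm\otimes\nn)}\otimes\tfrac{\nn}{\nu(\mm\otimes\nn)}\to0,
\]
where the first two tensor products are identified with their images. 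On the quotient both induced actions are trivial, because ${}^mn\in\nu(\mm\otimes\nn)$ and ${}^nm\in\mu(\mm\otimes\nn)$; the defining relations then collapse, so the quotient is abelian and spanned by the finitely many elements $\bar m_i\otimes\bar n_j$ coming from generating sets $M_1=\{m_1,\dots,m_k\}$ of $\mm$ and $N_1=\{n_1,\dots,n_l\}$ of $\nn$. Hence it is finitely generated, and it suffices to exhibit finitely generated subalgebras $\gg$ and $\hh$ of $\mm\otimes\nn$ with $\mu(\mm\otimes\nn)\otimes\nn\subseteq\gg$ and $\mm\otimes\nu(\mm\otimes\nn)\subseteq\hh$. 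By symmetry I would only treat $\gg$.

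To build $\gg$, fix a finite generating set $M_2=\{{}^{y_1}x_1,\dots,{}^{y_s}x_s\}$ of the ideal $\mu(\mm\otimes\nn)$ and define $\sigma\colon M_2\to\nn$ and $\alpha\colon M_2\to\mm\otimes\nn$ by $\sigma({}^{y}x)={}^xy$ and $\alpha({}^yx)=x\otimes y$. The compatibility conditions yield the two identities
\[
{}^{\sigma(b)}m=[m,b]\qquad\text{and}\qquad{}^bn=[n,\sigma(b)]
\]
for all $b\in M_2$, $m\in\mm$, $n\in\nn$, which are the Lie counterparts of~\eqref{new1}. Put
\[
X=\{\,m\otimes n\mid m\in M_2,\ n\in N_1\cup\sigma(M_2)\,\}\cup\alpha(M_2),
\]
a finite set, and let $\gg$ be the subalgebra it generates. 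Using the defining relations, in particular $m\otimes[n,n']={}^{n'}m\otimes n-{}^nm\otimes n'$ (which lets me reduce the $\nn$-variable to $N_1$, since $\mu(\mm\otimes\nn)$ is closed under the $\nn$-action), the problem reduces to proving that $m\otimes n\in\gg$ for every $m\in\mu(\mm\otimes\nn)$ and $n\in N_1\cup\sigma(M_2)$.

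This last point I would establish by induction on the length of $m$ as an iterated bracket of elements of $M_2$; after rewriting via antisymmetry and the Jacobi identity it is enough to treat left-normed brackets $m'=[[c,d],b]$ with $b,d\in M_2$ and $c\in\mu(\mm\otimes\nn)$ of length $i-2$. Expanding with $[m,m']\otimes n=m\otimes{}^{m'}n-m'\otimes{}^mn$ and then folding the two resulting terms back into brackets of tensors through $[m\otimes n,m'\otimes n']=-{}^nm\otimes{}^{m'}n'$ together with the identities above, I expect the clean expression
\[
[[c,d],b]\otimes n=[\alpha(b),[c,d]\otimes n]-[c\otimes\sigma(d),b\otimes n].
\]
Here $[c,d]\otimes n$ has length $i-1$ and $c\otimes\sigma(d)$ has length $i-2$, so both lie in $\gg$ by the induction hypothesis, while $\alpha(b)$ and $b\otimes n$ lie in $X$; the base cases (lengths $1$ and $2$) are dispatched by the same manipulation. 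The main obstacle is precisely this rewriting: one must route every occurrence of an action such as ${}^bn$ or ${}^{[c,d]}n$ through the compatibility identities so that it reappears inside a bracket of two tensors of strictly smaller complexity, and I would expect the careful bookkeeping of signs and lengths to be the only delicate part, exactly as in Theorem~\ref{theorem1}.
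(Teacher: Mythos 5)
Your proof is essentially correct, but it takes a genuinely different route from the paper's. The paper disposes of this theorem in two lines: it observes that there is a surjective morphism of Leibniz algebras $\mm \star \nn \to \mm \otimes \nn$, $m*n \mapsto m\otimes n$, $n*m\mapsto -m\otimes n$ (every Lie algebra with a compatible Lie action being a Leibniz algebra with a compatible Leibniz action, with $m^n=-{}^nm$), notes that $\tau_\mm(\mm\star\nn)=\mu(\mm\otimes\nn)$ and $\tau_\nn(\mm\star\nn)=\nu(\mm\otimes\nn)$, and then quotes Theorem~\ref{theorem1}: if the two images are finitely generated, $\mm\star\nn$ is finitely generated, hence so is its quotient $\mm\otimes\nn$. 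You instead re-run the entire inductive machinery of Theorem~\ref{theorem1} directly in the Lie setting, and your key computations do check out: with $b={}^{y}x$, $\sigma(b)={}^xy$, $\alpha(b)=x\otimes y$, compatibility gives ${}^{\sigma(b)}m=[m,b]$ and $[\alpha(b),m'\otimes n']=-b\otimes{}^{m'}n'$, from which $[[c,d],b]\otimes n=[\alpha(b),[c,d]\otimes n]-[c\otimes\sigma(d),b\otimes n]$ follows exactly as you claim, and antisymmetry plus Jacobi does collapse the four Leibniz cases to the single left-normed one. What the paper's reduction buys is economy — no duplication of the induction and no need to re-justify the exact sequence, the choice of $M_2$ inside the image, or the length bookkeeping — at the small cost of checking that the Leibniz-to-Lie comparison map is a well-defined surjection compatible with the $\tau$'s and $\mu$, $\nu$. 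Your version buys a self-contained argument within the category of Lie algebras, which is arguably cleaner if one has not developed the Leibniz theory first; since Theorem~\ref{theorem1} is already in the paper, however, the reduction is the more efficient choice, and you might at least remark that your construction is literally the image of the paper's under the surjection $\mm\star\nn\to\mm\otimes\nn$.
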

\begin{proof} We have a surjective Leibniz algebra morphism $\mm \star \nn \to \mm \otimes \nn$ defined on generators
	by $m*n \mapsto m\otimes n$ and $n*m \mapsto -m\otimes n$, for $m\in \mm$ and $n\in \nn$. Hence, the theorem results from Theorem \ref{theorem1}.
\end{proof}

\subsection{Crossed modules of Lie algebras}
A \emph{crossed module} is a Lie homomorphism $\partial\colon \mm \to \pp$ together with an action of $\pp$ on $\mm$
such that
\begin{align*}
	\partial ({}^pm)=[p, \partial(m)]
	\qquad\text{and}\qquad
	{}^{\partial (m)}m'=[m, m']
\end{align*}
where $m$, $m' \in \mm$, $p\in \pp$.

\begin{proposition} \label{prop1}(\cite{El0})
	Let $\mm$ and $\nn$ be Lie algebras acting on each other compatibly. There are Lie actions
	of $\mm$ and $\nn$ on $\mm \otimes \nn$ given by
	$$
		^m(m'\otimes n')= [m, m']\otimes n' + m'\otimes{}^{m}n'
		\qquad\text{and}\qquad
		{}^n(m'\otimes n')= {}^nm'\otimes n' + m'\otimes [n, n'],
	$$
	where $m$, $m'\in \mm$, $n$, $n'\in \nn$. Moreover, $\mu\colon \mm \otimes \nn \to \mm$ and $\nu\colon \mm \otimes \nn \to \nn$
	are crossed modules.\noproof
\end{proposition}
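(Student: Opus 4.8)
\emph{Proof proposal.} The statement consists of two parts: that the displayed formulas define \emph{Lie actions} of $\mm$ and of $\nn$ on the Lie algebra $\mm\otimes\nn$, and that, with respect to these actions, $\mu$ and $\nu$ are crossed modules. I would carry out the $\mm$-action and the crossed module $\mu$ in detail, the treatment of the $\nn$-action and of $\nu$ being entirely analogous.

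For the first part I would use that an action of a Lie algebra $\mm$ on a Lie algebra $L$ is precisely a Lie homomorphism $\mm\to\mathrm{Der}(L)$, and split the verification accordingly. \emph{Well-definedness.} For a fixed $m\in\mm$, write $d_m$ for the candidate derivation ${}^m(\cdot)$. I would first define $d_m$ on the free Lie algebra on the symbols $m'\otimes n'$, where it is determined freely on generators by $m'\otimes n'\mapsto [m,m']\otimes n'+m'\otimes{}^m n'$, and then show that it descends to $\mm\otimes\nn$. Since $d_m$ is a derivation and the relation ideal is generated by the defining relators, it suffices to check that $d_m$ carries each relator into that ideal, i.e. that the formula is compatible with each of the six families of relations. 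The $\K$-linearity and additivity relations are immediate; the two mixed relations $m'\otimes[n,n']={}^{n'}m'\otimes n-{}^n m'\otimes n'$ and $[m',m'']\otimes n=m'\otimes{}^{m''}n-m''\otimes{}^{m'}n$ follow after expansion from the action axioms of the mutual actions and the Jacobi identity. \emph{Homomorphism property.} I would then check that $m\mapsto d_m$ is $\K$-linear and that $d_{[m,m']}=d_m d_{m'}-d_{m'}d_m$; evaluated on a generator, the latter reduces to the first action axiom ${}^{[m,m']}n={}^m({}^{m'}n)-{}^{m'}({}^m n)$ together with the Jacobi identity in $\mm$.

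For the crossed module part I would verify the two axioms on generators, using $\mu(m\otimes n)=-{}^n m$ and $\nu(m\otimes n)={}^m n$. The equivariance axiom $\mu({}^m x)=[m,\mu(x)]$ unwinds on a generator to the identity $-{}^{n'}[m,m']-{}^{({}^m n')}m'=-[m,{}^{n'}m']$, which follows from the derivation property of the $\nn$-action on $\mm$ combined with the compatibility identity ${}^{({}^m n')}m'=[m',{}^{n'}m]$. The Peiffer axiom ${}^{\mu(x)}y=[x,y]$ reduces, on generators, to showing that ${}^{({}^n m)}(m'\otimes n')={}^n m\otimes{}^{m'}n'$, which is again precisely where compatibility is used.

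The main obstacle is well-definedness with respect to the single multiplicative relation $[m\otimes n,m'\otimes n']=-{}^n m\otimes{}^{m'}n'$, together with the parallel Peiffer computation: these are the only places where a commutator of generators must be rewritten as a single generator, so $d_m$ has to be computed on both sides and the two outcomes reconciled. It is exactly there that the compatibility hypotheses ${}^{({}^m n)}m'=[m',{}^n m]$ and ${}^{({}^n m)}n'=[n',{}^m n]$ are indispensable, whereas the remaining relations need only the action axioms and the Jacobi identity. Since the result is due to Ellis, I would expect these computations to organise cleanly, with each relation checked once and compatibility invoked solely for the bracket relation.
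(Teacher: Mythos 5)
The paper offers no proof of this proposition: it is quoted from Ellis~\cite{El0} and marked with a q.e.d.\ box, so there is no in-paper argument to compare against. Your outline is nevertheless a correct plan for the standard verification, and it matches how the result is actually established: realising each ${}^m(-)$ as a derivation of the free Lie algebra on the symbols $m'\otimes n'$ and checking that it maps each defining relator into the relation ideal is exactly the right mechanism for well-definedness, and you correctly isolate the bracket relation $[m\otimes n, m'\otimes n']=-{}^nm\otimes{}^{m'}n'$ and the Peiffer identity as the only places where the compatibility conditions are indispensable (the remaining relations need only the action axioms; e.g.\ applying $d_m$ to $[m',m'']\otimes n - m'\otimes{}^{m''}n+m''\otimes{}^{m'}n$ and rewriting each $[a,b]\otimes c$ leaves $m\otimes\bigl({}^{[m',m'']}n-{}^{m'}({}^{m''}n)+{}^{m''}({}^{m'}n)\bigr)=0$). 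Your displayed equivariance identity $-{}^{n'}[m,m']-{}^{({}^mn')}m'=-[m,{}^{n'}m']$ does follow from ${}^{n'}[m,m']=[{}^{n'}m,m']+[m,{}^{n'}m']$ together with ${}^{({}^mn')}m'=[m',{}^{n'}m]=-[{}^{n'}m,m']$, as claimed. One point to make explicit in a write-up: the Peiffer identity ${}^{\mu(x)}y=[x,y]$, once verified on generators, does not extend by bilinearity alone, since general elements of $\mm\otimes\nn$ are iterated brackets of generators; it extends by induction on bracket length, using the derivation property of the action and the already-established equivariance of $\mu$. With that caveat, the proposal is sound.
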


Let $\mm$ and $\nn$ be Lie algebras acting on each other compatibly. It is easy to check that
the action of $\mm \otimes \nn$ on $\nn$, given by $^{\omega}n=[\nu(\omega ), n]$, for each
$\omega \in \mm \otimes \nn$, $n\in \nn$, is a well-defined Lie action.

\begin{lemma} The mutual actions of $\mm \otimes \nn$ and $\nn$ are compatible.
\end{lemma}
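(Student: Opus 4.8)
The plan is to verify directly the two defining identities for compatibility of the mutual actions, in which $\mm \otimes \nn$ acts on $\nn$ by ${}^{\omega}n = [\nu(\omega), n]$ and $\nn$ acts on $\mm \otimes \nn$ by the formula of Proposition~\ref{prop1}. Writing $\omega$, $\omega'$ for elements of $\mm \otimes \nn$ and $n$, $n'$ for elements of $\nn$, the conditions to prove are
$$
{}^{({}^{\omega}n)}\omega' = [\omega', {}^{n}\omega]
\qquad\text{and}\qquad
{}^{({}^{n}\omega)}n' = [n', {}^{\omega}n].
$$
The two ingredients I would feed in are the crossed-module axioms for $\nu\colon \mm \otimes \nn \to \nn$ provided by Proposition~\ref{prop1}, namely the equivariance $\nu({}^{n}\omega) = [n, \nu(\omega)]$ and the Peiffer identity ${}^{\nu(\omega)}\omega' = [\omega, \omega']$, together with the two Lie-action axioms for the action of $\nn$ on $\mm \otimes \nn$. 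Because every expression involved is linear in its arguments and these structural identities hold for arbitrary elements, there is no need to pass to the generators $m \otimes n$.

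The second identity is immediate. Its left-hand side equals $[\nu({}^{n}\omega), n']$ by definition of the action of $\mm \otimes \nn$ on $\nn$, and equivariance of $\nu$ rewrites this as $[[n, \nu(\omega)], n']$. On the other side, $[n', {}^{\omega}n] = [n', [\nu(\omega), n]]$, and a single use of antisymmetry of the bracket of $\nn$ shows that the two agree.

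The first identity is where the real work lies, and I expect it to be the main step. Beginning from ${}^{({}^{\omega}n)}\omega' = {}^{[\nu(\omega), n]}\omega'$, I would expand using the Lie-action axiom ${}^{[a,b]}\omega' = {}^{a}({}^{b}\omega') - {}^{b}({}^{a}\omega')$ with $a = \nu(\omega)$ and $b = n$, obtaining ${}^{\nu(\omega)}({}^{n}\omega') - {}^{n}({}^{\nu(\omega)}\omega')$. Applying the Peiffer identity to both occurrences of ${}^{\nu(\omega)}(\,\cdot\,)$ (including the one nested inside ${}^{n}(\,\cdot\,)$) turns this into $[\omega, {}^{n}\omega'] - {}^{n}[\omega, \omega']$, and expanding the last term by the derivation axiom ${}^{n}[\omega, \omega'] = [{}^{n}\omega, \omega'] + [\omega, {}^{n}\omega']$ leaves precisely $-[{}^{n}\omega, \omega'] = [\omega', {}^{n}\omega]$, the required right-hand side. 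The only genuine subtlety is keeping track of the interplay between the Peiffer identity and the derivation property in this last computation; once those are invoked in the right order, everything collapses by antisymmetry.
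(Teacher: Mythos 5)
Your proof is correct, but it follows a genuinely different route from the paper's. The paper verifies the two compatibility identities by brute force on generators $m\otimes n$, $m_1\otimes n_1$, expanding everything with the explicit defining relations of $\mm\otimes\nn$ (such as $[m\otimes n, m'\otimes n']=-{}^nm\otimes{}^{m'}n'$) and the compatibility of the original actions of $\mm$ and $\nn$. You instead work with arbitrary elements $\omega$, $\omega'$ and deduce both identities purely from the crossed-module axioms for $\nu$ supplied by Proposition~\ref{prop1} (equivariance and the Peiffer identity) together with the Lie-action axioms; your computation
\[
{}^{[\nu(\omega),n]}\omega'={}^{\nu(\omega)}({}^n\omega')-{}^n({}^{\nu(\omega)}\omega')=[\omega,{}^n\omega']-[{}^n\omega,\omega']-[\omega,{}^n\omega']=[\omega',{}^n\omega]
\]
checks out, as does the easier second identity. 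What your argument really proves is the general fact that any Lie crossed module $\partial\colon\mathfrak{a}\to\pp$ induces compatible mutual actions once $\mathfrak{a}$ acts on $\pp$ by ${}^a p=[\partial(a),p]$ --- a cleaner and more reusable statement, which moreover sidesteps the question of why checking the identities on generators of $\mm\otimes\nn$ suffices (your identities hold for arbitrary elements because every ingredient does). The trade-off is that you lean entirely on Proposition~\ref{prop1}, which the paper cites from Ellis without proof, whereas the paper's computation is self-contained modulo the defining relations of the tensor product. One small quibble: your remark that generators can be avoided ``because every expression involved is linear in its arguments'' is not quite the right justification --- linearity alone would not handle brackets of generators --- but the clause that follows it (the structural identities hold for arbitrary elements) is the correct reason and is what your argument actually uses.
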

\begin{proof} We need to check that
	$$
		^{^{(m\otimes n)}n'}(m_1\otimes n_1)=[m_1\otimes n_1,{}^{n'}(m\otimes n)]
		\qquad\text{and}\qquad
		^{^{n_1}(m\otimes n)}n_2= [n_2, {}^{m\otimes n}n_1]
	$$
	for each $m$, $m_1\in \mm$, $n$, $n_1$, $n_2$, $n'\in \nn$. We have
	\begin{align*}
		^{^{(m\otimes n)}n'}(m_1\otimes n_1) & = {}^{[{}^mn, n']}(m_1\otimes n_1)                                                              \\
		                                     & = {}^{[{}^mn, n']}m_1\otimes n_1
		+ m_1\otimes [[{}^mn, n'], n_1]                                                                                                        \\
		                                     & = {}^{[{}^mn, n']}m_1\otimes n_1+ {}^{n_1}m_1\otimes [{}^mn, n']-{}^{[{}^mn, n']}m_1\otimes n_1 \\
		                                     & = {}^{n_1}m_1\otimes [{}^mn, n']
	\end{align*}
	while, on the other hand,
	\begin{align*}
		[m_1\otimes n_1,{}^{n'}(m\otimes n)] & =[m_1\otimes n_1, {}^{n'} m\otimes n+m\otimes [n',n]]              \\
		                                     & =
		[m_1\otimes n_1, {}^{n'} m\otimes n+{}^nm\otimes n'-{}^{n'} m\otimes n]                                   \\
		                                     & =[m_1\otimes n_1, {}^nm\otimes n']=-{}^{n_1}m_1 \otimes {}^{^nm}n' \\
		                                     & = -{}^{n_1}m_1 \otimes {}^{^nm}n' =
		-{}^{n_1}m_1 \otimes [n', {}^mn]                                                                          \\
		                                     & ={}^{n_1}m_1 \otimes [{}^mn, n'].
	\end{align*}
	Moreover,
	\begin{align*}
		^{^{n_1}(m\otimes n)}n_2 & = {}^{^{n_1}m\otimes n + m\otimes [n_1, n]}n_2 = [{}^{n_1}m\otimes n + m\otimes [n_1, n], n_2] \\
		                         & =[{}^{^{n_1}m} n, n_2]+[{}^m [n_1, n], n_2]                                                    \\
		                         & =[[n, {}^m{n_1}], n_2]+[([{}^mn_1, n]+[n_1, {}^mn]), n_2 ]                                     \\
		                         & =[[n_1, {}^mn], n_2 ] = [n_2, [{}^mn, n_1]] =[n_2, {}^{m\otimes n}n_1],
	\end{align*}
	which finishes the proof.
\end{proof}

Thanks to this lemma we can consider the tensor product $(\mm \otimes \nn)\otimes \nn$ with its canonical homomorphism $(\mm \otimes \nn)\otimes \nn \to \nn$, which will again be a crossed module. Inductively, we obtain a crossed module
$(\cdots ((\mm \otimes \nn)\otimes \nn )\otimes \cdots )\otimes \nn \to \nn$. If $\mm = \nn$, then we introduce the following notations:
$$
	\mm^{\otimes 2}=\mm \otimes \mm, \qquad \mm^{\otimes i+1}=(\mm^{\otimes i}) \otimes \mm, \; i\geq 1.
$$
We denote by $\nu_{i}$ the homomorphism $\mm^{\otimes i}\to \mm$ described above. An explicit formula for $\nu_i$ on
generators is
$$
	\nu_{i} (\cdots ((m_1 \otimes m_2)\otimes m_3 )\otimes \cdots \otimes m_{i})=
	[\cdots [[m_1, m_2], m_3 ], \dots , m_{i}] .
$$

Denote by $\mu_\nn (\mm)$ the image of $\mu\colon \mm \otimes \nn \to \mm$. Since $\mu_\nn (\mm)$ is closed under the action of $\nn$,
we can form $\mu_\nn (\mu_\nn (\mm))$. By the same argument we can form $\mu_{\nn} ( \mu_\nn (\mu_\nn (\mm)) )$ and so on.
Set $\mu_\nn^0(\mm)=\mm$, $\mu_\nn^{i+1}(\mm)=\mu_\nn ( \mu_\nn^{i}(\mm) )$, $i\geq 0$. Since $\mu_\nn^{i}(\mm)\otimes \nn$ and $\nn$
act on each other compatibly, we can consider $\mu_\nn^{j}(\mu_\nn^{i}(\mm)\otimes \nn )$, for each $i$, $j \geq 0$.

\begin{lemma}\label{lemma1} We have
	$$
		\mu_\nn^{j}(\mu_\nn^{i}(\mm)\otimes \nn )= \Img \bigl( \mu_\nn^{i+j}(\mm)\otimes \nn \to \mu_\nn^{i}(\mm)\otimes \nn\bigr),
	$$
	where $\mu_\nn^{i+j}(\mm)\otimes \nn \to \mu_\nn^{i}(\mm)\otimes \nn$ is the homomorphism induced by the natural inclusion
	$\mu_\nn^{i+j}(\mm)\to \mu_\nn^{i}(\mm)$ for each $i$, $j \geq 0$.
\end{lemma}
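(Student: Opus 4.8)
The plan is to prove the identity by induction on $j$, uniformly in $i$, after isolating two ingredients: one explicit computation describing $\mu_\nn$ of a tensor product, and one naturality property of the operation $\mu_\nn$.

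First I would record how $\nn$ acts on a tensor product $\ll \otimes \nn$, for $\ll$ any Lie algebra acting compatibly on $\nn$. Combining the action formula ${}^{q}(x \otimes p) = {}^{q}x \otimes p + x \otimes [q, p]$ of Proposition~\ref{prop1} with the defining relation $x \otimes [q, p] = {}^{p}x \otimes q - {}^{q}x \otimes p$ of the tensor product, one obtains the identity ${}^{q}(x \otimes p) = {}^{p}x \otimes q$, valid for all $x \in \ll$ and $p$, $q \in \nn$. Since $\mu_\nn(\ll \otimes \nn)$ is the ideal generated by the elements ${}^{q}(x \otimes p)$, and since ${}^{p}x \in \mu_\nn(\ll)$, this identity yields the inclusion $\mu_\nn(\ll \otimes \nn) \subseteq \Img(\mu_\nn(\ll) \otimes \nn \to \ll \otimes \nn)$. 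For the reverse inclusion I would check that $\bar{x} \otimes q$ belongs to $\mu_\nn(\ll \otimes \nn)$ for every $\bar{x} \in \mu_\nn(\ll)$ and $q \in \nn$, arguing by induction on the bracket length of $\bar{x}$: the generators $\bar{x} = {}^{p}x$ are covered by the identity, while the relation $[\bar{x}_1, \bar{x}_2] \otimes q = \bar{x}_1 \otimes {}^{\bar{x}_2}q - \bar{x}_2 \otimes {}^{\bar{x}_1}q$ reduces a bracket to strictly shorter factors tensored with elements of $\nn$. This establishes the key special case
\[
	\mu_\nn(\ll \otimes \nn) = \Img\bigl(\mu_\nn(\ll) \otimes \nn \to \ll \otimes \nn\bigr) \qquad (\star)
\]

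Second, I would isolate a naturality property. If $f \colon A \to B$ is an $\nn$-equivariant homomorphism of Lie algebras acting compatibly on $\nn$, and $S \subseteq A$ is an $\nn$-stable subalgebra, then $\mu_\nn(f(S)) = f(\mu_\nn(S))$. This is immediate from the description of $\mu_\nn(S)$ as generated by the elements ${}^{q}s$ (with $s \in S$, $q \in \nn$) together with the equivariance $f({}^{q}s) = {}^{q}f(s)$; iterating gives $\mu_\nn^{k}(f(S)) = f(\mu_\nn^{k}(S))$ for all $k \geq 0$. I would apply this to the maps $f_i \colon \mu_\nn^{i+1}(\mm) \otimes \nn \to \mu_\nn^{i}(\mm) \otimes \nn$ induced by the inclusions $\mu_\nn^{i+1}(\mm) \hookrightarrow \mu_\nn^{i}(\mm)$, which are $\nn$-equivariant because the action of $\nn$ on $\ll \otimes \nn$ is natural in $\ll$.

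With these in hand the induction on $j$ is short. The case $j = 0$ is the identity map and is trivial. For the step I would write
\[
	\mu_\nn^{j+1}(\mu_\nn^{i}(\mm) \otimes \nn) = \mu_\nn^{j}\bigl(\mu_\nn(\mu_\nn^{i}(\mm) \otimes \nn)\bigr) = \mu_\nn^{j}\bigl(f_i(\mu_\nn^{i+1}(\mm) \otimes \nn)\bigr),
\]
using $(\star)$ with $\ll = \mu_\nn^{i}(\mm)$ in the second equality. The naturality property pulls $f_i$ out, giving $f_i\bigl(\mu_\nn^{j}(\mu_\nn^{i+1}(\mm) \otimes \nn)\bigr)$, and the induction hypothesis at $i+1$ rewrites the inner term as $\Img(\mu_\nn^{i+j+1}(\mm) \otimes \nn \to \mu_\nn^{i+1}(\mm) \otimes \nn)$. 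As the composite of $f_i$ with the map $\mu_\nn^{i+j+1}(\mm) \otimes \nn \to \mu_\nn^{i+1}(\mm) \otimes \nn$ is exactly the map induced by the inclusion $\mu_\nn^{i+j+1}(\mm) \hookrightarrow \mu_\nn^{i}(\mm)$, this yields $\Img(\mu_\nn^{i+j+1}(\mm) \otimes \nn \to \mu_\nn^{i}(\mm) \otimes \nn)$, completing the step. The part I expect to demand the most care is the verification of $(\star)$, and within it the reverse inclusion: one must reduce an arbitrary element of $\mu_\nn(\ll)$ to the generators ${}^{p}x$ and track, through the induction on bracket length, that each reduction stays inside $\mu_\nn(\ll \otimes \nn)$, for which it is convenient to observe that both sides of $(\star)$ are ideals of $\ll \otimes \nn$. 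Once $(\star)$ is secured, the naturality property and the induction are essentially formal.
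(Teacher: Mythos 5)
Your proposal is correct and follows essentially the same route as the paper: the key computation ${}^{n'}(m\otimes n)={}^{n'}m\otimes n+m\otimes[n',n]={}^{n}m\otimes n'$ giving the case $j=1$, followed by induction on $j$. You simply make explicit two points the paper leaves implicit, namely the reverse inclusion in your $(\star)$ via the relation $[m,m']\otimes n=m\otimes{}^{m'}n-m'\otimes{}^{m}n$, and the naturality needed to carry out the induction step.
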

\begin{proof} We have
	$$
		^{n'}(m\otimes n) = {}^{n'}m \otimes n + m\otimes [n', n]= {}^{n'}m \otimes n + {}^n{m}\otimes n' -{}^{n'}m\otimes n = {}^n{m}\otimes n',
	$$
	for each $m\in \mu_\nn^{i}(\mm)$ and $n$, $n'\in \nn$. Since $^n{m}\in \mu_\nn^{i+1}(\mm)$, we get
	$$
		\mu_\nn^{1}(\mu_\nn^{i}(\mm)\otimes \nn )= \Img \bigl( \mu_\nn^{i+1}(\mm)\otimes \nn \to \mu_\nn^{i}(\mm)\otimes \nn\bigr),
	$$
	where $\mu_\nn^{i+1}(\mm)\otimes \nn \to \mu_\nn^{i}(\mm)\otimes \nn$ is naturally induced as above. By induction we obtain the desired result.
\end{proof}

Given a Lie algebra $\mm$, recall that the lower central series $\gamma_1(\mm)$, $\gamma_2(\mm)$, \dots\ is defined by
$\gamma_1(\mm)=\mm$, $\gamma_{i+1}=[\gamma_i(\mm), \mm]$, $i\geq 1$. Observe that the homomorphism
$\nu_{i+1}\colon \mm^{i+1}\to \mm$ defined above is such that $\gamma_{i+1}(\mm)=\Img(\nu_{i+1})$.

\begin{theorem}\label{ThmTensorGamma}
	Let $\mm$ be finitely generated Lie algebra and $i$ be a fixed natural number.
	Then the following are equivalent:
	\begin{tfae}
		\item $\gamma_{i+1}(\mm)$ is finitely generated;
		\item $\mm^{\otimes i+1}$ is finitely generated.
	\end{tfae}
\end{theorem}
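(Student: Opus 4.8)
The plan is to prove the trivial implication (ii)$\Rightarrow$(i) directly and to treat (i)$\Rightarrow$(ii) by induction on $i$, reducing everything to Theorem~\ref{theorem2} and Lemma~\ref{lemma1}. For (ii)$\Rightarrow$(i), since $\gamma_{i+1}(\mm)=\Img(\nu_{i+1})$ and $\nu_{i+1}\colon\mm^{\otimes i+1}\to\mm$ is a homomorphism, finite generation of $\mm^{\otimes i+1}$ at once forces finite generation of its image $\gamma_{i+1}(\mm)$. For the converse I would write $\mm^{\otimes i+1}=\mm^{\otimes i}\otimes\mm$ and invoke Theorem~\ref{theorem2}: once $\mm^{\otimes i}$ and $\mm$ are finitely generated, $\mm^{\otimes i+1}$ is finitely generated as soon as the two images $\nu(\mm^{\otimes i}\otimes\mm)$ and $\mu(\mm^{\otimes i}\otimes\mm)$ are. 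The first equals $\gamma_{i+1}(\mm)$, which is finitely generated by hypothesis, so the whole difficulty concentrates in the image of $\mu$, that is, in $\mu_\mm(\mm^{\otimes i})$.

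Two preliminary facts feed the induction. First, for finitely generated $\mm$ each quotient $\gamma_k(\mm)/\gamma_{k+1}(\mm)$ is spanned by the finitely many left-normed brackets of weight $k$ in a fixed finite generating set, hence is finite-dimensional; combining finite generators of $\gamma_{k+1}(\mm)$ with finitely many lifts of a basis of this quotient shows that $\gamma_{k+1}(\mm)$ finitely generated implies $\gamma_k(\mm)$ finitely generated, and therefore that $\gamma_{i+1}(\mm)$ finitely generated implies $\gamma_k(\mm)$ finitely generated for every $k\le i+1$. In particular $\gamma_i(\mm)$ is finitely generated, so by the inductive hypothesis $\mm^{\otimes i}$ is finitely generated, which is exactly what licenses the use of Theorem~\ref{theorem2} above. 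Second, for an ideal $\gamma_k(\mm)$ with the restricted adjoint actions, both $\mu$ and $\nu$ on $\gamma_k(\mm)\otimes\mm$ have image $[\gamma_k(\mm),\mm]=\gamma_{k+1}(\mm)$; hence by Theorem~\ref{theorem2} the product $\gamma_k(\mm)\otimes\mm$ is finitely generated whenever $\gamma_{k+1}(\mm)$ is. This elementary computation is where the hypothesis truly enters.

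It remains to prove that $\mu_\mm(\mm^{\otimes i})$ is finitely generated, and this is the heart of the matter. The key is that Lemma~\ref{lemma1}, applied with base $\mm^{\otimes k-1}$ and $\nn=\mm$, exhibits $\mu_\mm^{r}(\mm^{\otimes k})$ as the image of the $\mm$-equivariant map $\mu_\mm^{r}(\mm^{\otimes k-1})\otimes\mm\to\mm^{\otimes k}$, so that finite generation of the tensor product on the left transfers to it. I would therefore prove, by induction on $k$, that $\mu_\mm^{r}(\mm^{\otimes k})\otimes\mm$ is finitely generated for all $r\ge0$ with $k+r+1\le i+1$. The base case $k=1$ is precisely the second preliminary fact, since $\mu_\mm^{r}(\mm)=\gamma_{r+1}(\mm)$. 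For the inductive step one applies Theorem~\ref{theorem2} to $\mu_\mm^{r}(\mm^{\otimes k})\otimes\mm$: its $\nu$-image is $\gamma_{k+r+1}(\mm)$, finitely generated because $k+r+1\le i+1$; its $\mu$-image is $\mu_\mm^{r+1}(\mm^{\otimes k})$; and both this and $\mu_\mm^{r}(\mm^{\otimes k})$ are finitely generated by Lemma~\ref{lemma1} together with the induction on $k$, since replacing $k$ by $k-1$ keeps the relevant indices $(k-1)+(r+1)+1=k+r+1$ and $(k-1)+r+1=k+r$ within the bound $i+1$. Taking $k=i$ and $r=0$ then finishes the reduction, since $\mu_\mm(\mm^{\otimes i})=\Img\bigl(\mu_\mm(\mm^{\otimes i-1})\otimes\mm\to\mm^{\otimes i}\bigr)$.

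The step I expect to be the main obstacle is controlling the image of $\mu$, i.e. showing $\mu_\mm(\mm^{\otimes i})$ finitely generated: trying to deduce it from finite generation of $\mm^{\otimes i+1}$ is circular, and an ideal of a finitely generated Lie algebra need not be finitely generated, so some genuine structure is needed. The delicate point is to run the auxiliary induction over the \emph{entire} tower $\mu_\mm^{r}(\mm^{\otimes k})$ and to check that every lower central term arising as a $\nu$-image remains a $\gamma_k(\mm)$ with $k\le i+1$, so that no finite generation beyond $\gamma_{i+1}(\mm)$ is ever invoked; the bookkeeping of the quantity $k+r+1$ is exactly what makes the recursion close at the right place. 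A routine verification I would relegate is that the restricted actions of the subalgebras $\mu_\mm^{r}(\mm^{\otimes k})$ on $\mm$ stay compatible, as demanded by Theorem~\ref{theorem2}.
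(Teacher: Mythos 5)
Your proposal is correct and follows essentially the same route as the paper: reduce to Theorem~\ref{theorem2} applied to $\mm^{\otimes s}\otimes\mm$, note that the $\nu$-images are the lower central terms $\gamma_{k}(\mm)$ with $k\le i+1$ (all finitely generated because the quotients $\gamma_k(\mm)/\gamma_{k+1}(\mm)$ are finite-dimensional), and control the $\mu$-images by a double induction over the tower $\mu_\mm^{t}(\mm^{\otimes s})$ bounded by $s+t\le i+1$ via Lemma~\ref{lemma1}. The only difference is cosmetic bookkeeping (you phrase the auxiliary claim in terms of $\mu_\mm^{r}(\mm^{\otimes k})\otimes\mm$ rather than $\mu_\mm^{t}(\mm^{\otimes s})$ itself), so nothing further is needed.
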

\begin{proof} The implication (ii)$\Rightarrow$(i) is obvious, so let us show the converse.
	We will prove that $\mm^{\otimes s}$ is finitely generated for all $1\leq s \leq i+1$, where $\mm^{\otimes 1}=\mm$.
	By Theorem \ref{theorem2} it is enough to prove that $\mu_{\mm^{\otimes s}}(\mm)$ and $\mu_\mm (\mm^{\otimes s})$ are
	finitely generated for all $1\leq s \leq i$. Note that $\mu_{\mm^{\otimes s}}(\mm)=\gamma_{s+1}(\mm)$. Moreover, since
	$\gamma_s(\mm)/\gamma_{s+1}(\mm)$ is finitely generated for all $s\geq 1$, $\gamma_s(\mm)$ will be finitely generated
	for all $1\leq s \leq i+1$. Thus, it suffices to show that $\mu_\mm (\mm^{\otimes s})$ is finitely generated for all
	$1\leq s \leq i$.
	
	We will show that $\mu_\mm^t (\mm^{\otimes s})$ is finitely generated for all $t$ and $s$ such that $1\leq t$, $s \leq i$
	and $s+t\leq i+1$. If $s=1$, then it is true. Assume that the same is true for a fixed integer $s$. Using Lemma~\ref{lemma1}
	we have:
	$$
		\mu_{\mm}^t(\mm^{\otimes s+1}) = \mu_{\mm}^t(\mm^{\otimes s}\otimes \mm) =
		\Img \bigl(\mu_{\mm}^t(\mm^{\otimes s})\otimes \mm \to \mm^{\otimes s}\otimes \mm \bigr).
	$$
	If $t+s\leq i$, then by Theorem \ref{theorem2}, the algebra $\mu_{\mm}^t(\mm^{\otimes s})\otimes \mm$ will be finitely generated, because
	$$
		\mu_{\mu_{\mm}^t(\mm^{\otimes s})}(\mm)=\gamma_{s+t+1}(\mm) \qquad \text{and}\qquad \mu_{\mm}\big(\mu_{\mm}^t(\mm^{\otimes s})\big)=
		\mu_{\mm}^{t+1}(\mm^{\otimes s})
	$$
	are finitely generated. Hence, $\mu_{\mm}^t(\mm^{\otimes s+1})$ is finitely generated for all $t$, $s$ with $t+s\leq i$.
	This completes the proof.
\end{proof}

Let $\ll$ be a Lie algebra and $\rr$ an ideal of $\ll$. We define the Lie algebras $\gamma_i(\rr, \ll)$,
$i\geq 1$, by $\gamma_1(\rr, \ll)=\rr$, $\gamma_{i+1}(\rr, \ll)=
	[\gamma_i(\rr, \ll), \ll]$.

\begin{definition} We say that an extension of Lie algebras $0 \to \rr \to \ll \to \mm \to 0$ is \emph{$i$-central},
	if $\gamma_{i+1}(\rr, \ll)=0$.
\end{definition}

\begin{theorem} \label{theorem3.11}
	Let $\mm$ be a finitely generated Lie algebra and $i$ a natural number. Then the following are equivalent:
	\begin{tfae}
		\item $\gamma_{i+1}(\mm)$ is finitely generated;
		\item there exists an $i$-central extension $0 \to \rr \to \ll \to \mm \to 0$ of Lie algebras, where $\gamma_{i+1}(\ll)$ is finitely generated;
		\item for every $i$-central extension $0 \to \rr \to \ll \to \mm \to 0$ of Lie algebras, $\gamma_{i+1}(\ll)$ is finitely generated.
	\end{tfae}
\end{theorem}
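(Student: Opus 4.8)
The implications (iii)$\Rightarrow$(ii)$\Rightarrow$(i) are routine, and I would dispose of them first: for (iii)$\Rightarrow$(ii) one applies (iii) to the split extension $0\to0\to\mm\to\mm\to0$, which is $i$-central since $\gamma_{i+1}(0,\mm)=0$; for (ii)$\Rightarrow$(i) the projection $\ll\to\mm$ restricts to a surjection $\gamma_{i+1}(\ll)\to\gamma_{i+1}(\mm)$, and a quotient of a finitely generated algebra is finitely generated. The real content is (i)$\Rightarrow$(iii), which I would attack in exact parallel with the proof of Theorem~\ref{ThSchurLeibniz}, replacing $\mm\star\mm$ by $\mm^{\otimes i+1}$.

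Assume $\gamma_{i+1}(\mm)$ is finitely generated. By Theorem~\ref{ThmTensorGamma} this is equivalent to $\mm^{\otimes i+1}$ being finitely generated, and that is the input I would exploit. Fix an arbitrary $i$-central extension $0\to\rr\to\ll\to\mm\to0$, so that $\gamma_{i+1}(\rr,\ll)=0$; note that $\ll$ need not be finitely generated, which is precisely why the tensor-product method is useful. Let $\nu_{i+1}^{\ll}\colon\ll^{\otimes i+1}\to\ll$ be the homomorphism given by the same left-normed bracket formula as $\nu_{i+1}$, so that $\Img(\nu_{i+1}^{\ll})=\gamma_{i+1}(\ll)$. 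The goal is to manufacture a surjection $\mm^{\otimes i+1}\to\gamma_{i+1}(\ll)$: since the source is finitely generated, the target then is too, which is exactly (iii).

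The plan is to factor $\nu_{i+1}^{\ll}$ through the canonical projection. By functoriality of the non-abelian tensor product, the surjection $\ll\to\mm$ (a morphism of pairs for the adjoint self-actions) induces a surjective Lie homomorphism $p\colon\ll^{\otimes i+1}\to\mm^{\otimes i+1}$. Iterating $\mm^{\otimes i+1}=\mm^{\otimes i}\otimes\mm$, and using at each stage the analogue of the short exact sequence appearing in the proof of Theorem~\ref{ThSchurLeibniz}, I would identify the kernel $K$ of $p$ as the ideal generated by the simple tensors $(\cdots(\ell_1\otimes\ell_2)\otimes\cdots)\otimes\ell_{i+1}$ having at least one factor $\ell_j\in\rr$. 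The crucial step is then to show that $\nu_{i+1}^{\ll}$ annihilates $K$. On such a generator one computes
$$\nu_{i+1}^{\ll}\bigl((\cdots(\ell_1\otimes\ell_2)\otimes\cdots)\otimes\ell_{i+1}\bigr)=[\cdots[[\ell_1,\ell_2],\ell_3],\dots,\ell_{i+1}],$$
a left-normed bracket of $i+1$ elements of $\ll$, one of which lies in $\rr$. A short induction on the length of the bracket, using only that $\rr$ is an ideal together with the Jacobi identity, places every such bracket inside $\gamma_{i+1}(\rr,\ll)$, which vanishes by $i$-centrality. Since $\nu_{i+1}^{\ll}$ is a homomorphism killing these generators, it kills the whole ideal they generate: for any $x$ and any generator $s$ one has $\nu_{i+1}^{\ll}([x,s])=[\nu_{i+1}^{\ll}(x),0]=0$. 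Hence $\nu_{i+1}^{\ll}$ factors through $p$, yielding a homomorphism $\mm^{\otimes i+1}\to\ll$ with image $\gamma_{i+1}(\ll)$, the desired surjection.

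I expect the main obstacle to be the kernel analysis rather than the reduction: establishing the higher short exact sequence $0\to K\to\ll^{\otimes i+1}\to\mm^{\otimes i+1}\to0$ with $K$ generated by tensors meeting $\rr$, and proving the bracket-counting lemma that sends each iterated bracket with a factor in $\rr$ into the relative lower central term $\gamma_{i+1}(\rr,\ll)$. Once these two facts are in place, the conclusion follows immediately from finite generation of $\mm^{\otimes i+1}$, and no finiteness hypothesis on $\ll$ is ever needed.
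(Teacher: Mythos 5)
Your proposal is correct and follows essentially the same route as the paper: the short exact sequence $0\to\widetilde{\ll^{\otimes i+1}}\to\ll^{\otimes i+1}\to\mm^{\otimes i+1}\to 0$ with kernel generated by the simple tensors meeting $\rr$, the vanishing of $\nu^{\ll}_{i+1}$ on that kernel via $i$-centrality, the induced surjection $\mm^{\otimes i+1}\to\gamma_{i+1}(\ll)$, and an appeal to Theorem~\ref{ThmTensorGamma}. The only difference is that you spell out the two steps the paper leaves implicit, namely the identification of the kernel and the Jacobi-identity induction placing each left-normed bracket with a factor in $\rr$ inside $\gamma_{i+1}(\rr,\ll)$.
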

\begin{proof}
	The implications (iii)$\Rightarrow$(ii)$\Rightarrow$(i) are obvious, so let us show that (i) implies (iii). Since non-abelian tensor products are right exact, we have the short exact sequence of Lie algebras
	$$
		0 \to \widetilde{\ll^{\otimes i+1}} \to \ll^{\otimes i+1} \to \mm^{\otimes i+1} \to 0
	$$
	where $\widetilde{\ll^{\otimes i+1}}$ is the ideal of $\ll^{\otimes i+1}$ generated by all ${(\cdots ((l_1\otimes l_2)\otimes l_3)\otimes \cdots )\otimes l_{i+1}}$ such that $l_s\in \rr$ for at least one $s$. Therefore,
	$$
		\nu^\ll_{i+1}\big( \widetilde{\ll^{\otimes i+1}} \big) = 0.
	$$
	Since $\nu^\ll_{i+1}( \ll^{\otimes i+1} )= \gamma_{i+1}(\ll)$, we get a surjective homomorphism $\mm^{\otimes i+1} \to \gamma_{i+1}(\ll)$.
	The previous theorem completes the proof.
\end{proof}

\section{On Finitely Presented Lie Algebras}\label{S:LieFinPres}

Let $\mm$ be a Lie algebra over a field $\K$ and consider $\nu_i\colon \mm^{\otimes i} \to \gamma_i(\mm)$, defined as in the previous section.

\begin{theorem}\label{theorem4.1}
	If $\mm$ a finitely presented $\K$-Lie algebra and $i\geq 2$, then the kernel of $\nu_i\colon \mm^{\otimes i} \to \gamma_i(\mm)$ is finite-dimensional over $\K$.
\end{theorem}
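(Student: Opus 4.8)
The plan is to induct on $i\geq 2$, resting on two finiteness inputs. First, since $\mm$ is finitely presented, its second homology $H_2(\mm)$ is finite-dimensional (for instance via the Lie-algebra Hopf formula), while $H_1(\mm)=\mm^{\ab}$ is finite-dimensional because $\mm$ is finitely generated. Second, every nilpotent quotient $\mm/\gamma_s(\mm)$ is a finitely generated nilpotent Lie algebra, hence finite-dimensional; in particular each $\gamma_s(\mm)/\gamma_{s+1}(\mm)$ is finite-dimensional.

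For the base case $i=2$, I would analyse $\nu_2\colon\mm\otimes\mm\to[\mm,\mm]$ through Ellis's exact sequences for the non-abelian tensor square: the exterior square sits in $0\to H_2(\mm)\to\mm\wedge\mm\to[\mm,\mm]\to 0$, and the kernel of $\mm\otimes\mm\to\mm\wedge\mm$ is a quotient of a quadratic functor of $\mm^{\ab}$. Thus $\ker\nu_2$ is an extension of $H_2(\mm)$ by a quotient of a quadratic functor of $\mm^{\ab}$; both are finite-dimensional by the inputs above, so $\ker\nu_2$ is finite-dimensional.

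For the inductive step, write $\mm^{\otimes i}=\mm^{\otimes i-1}\otimes\mm$ and factor $\nu_i=\beta\circ\alpha$, where $\alpha=\nu_{i-1}\otimes\mathrm{id}_{\mm}\colon\mm^{\otimes i-1}\otimes\mm\to\gamma_{i-1}(\mm)\otimes\mm$ is induced by the surjection $\nu_{i-1}$ and $\beta\colon\gamma_{i-1}(\mm)\otimes\mm\to[\gamma_{i-1}(\mm),\mm]=\gamma_i(\mm)$ is the canonical morphism. As $\alpha$ is surjective, there is a short exact sequence $0\to\ker\alpha\to\ker\nu_i\to\ker\beta\to 0$, so it suffices to bound both ends. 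By right exactness of the non-abelian tensor product, $\ker\alpha$ is the ideal generated by the elements $\kappa\otimes m$ with $\kappa\in\ker\nu_{i-1}$; writing $K=\ker\nu_{i-1}$ (finite-dimensional by induction, and an $\mm$-submodule), the relation $\kappa\otimes[m,m']={}^{m'}\kappa\otimes m-{}^{m}\kappa\otimes m'$ reduces every $\kappa\otimes m$ to a combination of $\kappa'\otimes g$ with $\kappa'\in K$ and $g$ in a fixed finite generating set of $\mm$. Hence these elements span a finite-dimensional subspace $V$, and $V$ is an ideal, since $[\kappa\otimes m,p\otimes m']=-{}^{m}\kappa\otimes{}^{p}m'\in V$ while $[p\otimes m',\kappa\otimes m]=-{}^{m'}p\otimes[\nu_{i-1}(\kappa),m]=0$ as $\nu_{i-1}(\kappa)=0$. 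Therefore $\ker\alpha=V$ is finite-dimensional.

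It remains to bound $\ker\beta$, the relative Schur multiplier of the ideal $\mathfrak a:=\gamma_{i-1}(\mm)$ of $\mm$, and this is where I expect the main difficulty. The inclusion $\mathfrak a\hookrightarrow\mm$ induces $\iota\otimes\mathrm{id}\colon\mathfrak a\otimes\mm\to\mm\otimes\mm$ with $\nu_2\circ(\iota\otimes\mathrm{id})$ equal to $\beta$ followed by $[\mathfrak a,\mm]\hookrightarrow[\mm,\mm]$; consequently $\iota\otimes\mathrm{id}$ carries $\ker\beta$ into $\ker\nu_2$, whence $\dim\ker\beta\leq\dim\ker\nu_2+\dim\ker(\iota\otimes\mathrm{id})$. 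The first term is finite by the base case. The kernel of the ideal-induced map $\iota\otimes\mathrm{id}$ is the genuinely delicate point: I would control it by a relative multiplier argument, exploiting that $\mm/\mathfrak a$ is finite-dimensional, so all its homology is finite-dimensional, and that the coinvariants $\mathfrak a/[\mathfrak a,\mm]=\gamma_{i-1}(\mm)/\gamma_i(\mm)$ are finite-dimensional. Granting this, $\ker\beta$ is finite-dimensional, and the short exact sequence closes the induction.
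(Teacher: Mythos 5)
Your base case and the skeleton of your induction coincide with the paper's: for $i=2$ you use the exterior square, the Hopf formula for $H_2(\mm)$, and the quadratic-functor kernel of $\mm\otimes\mm\to\mm\wedge\mm$, exactly as the paper does; and your factorisation $\nu_i=\beta\circ\alpha$ with the short exact sequence $0\to\Ker\alpha\to\Ker\nu_i\to\Ker\beta\to0$ is the same decomposition the paper obtains from right exactness of $-\otimes\mm$ (your $\Ker\alpha$ is the paper's $\sigma(\Ker(\nu_i)\otimes\mm)$). Your treatment of $\Ker\alpha$ --- reducing $\kappa\otimes m$ to combinations of $\kappa'\otimes g$ over a finite generating set via the relation $\kappa\otimes[m,m']={}^{m'}\kappa\otimes m-{}^{m}\kappa\otimes m'$ --- is also the paper's argument, and your verification that the resulting span is an ideal is correct.

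The genuine gap is in the last step: you reduce the finiteness of $\Ker\beta$ to the finiteness of $\Ker\bigl(\iota\otimes\mathrm{id}\colon\gamma_{i-1}(\mm)\otimes\mm\to\mm\otimes\mm\bigr)$ and then write ``I would control it by a relative multiplier argument\dots\ Granting this''. That granted statement is the heart of the theorem, and your reduction does not make it easier: the kernel of the map on tensor products induced by an ideal inclusion is precisely the sort of relative multiplier one has no elementary handle on, so you have traded one unproved finiteness claim for an equivalent (arguably harder) one. The paper closes this step with two specific exact sequences: first
\[
\Gamma\bigl(\gamma_{i-1}(\mm)/\gamma_{i}(\mm)\bigr)\to \Ker\bigl([-,-]\colon\gamma_{i-1}(\mm)\otimes\mm\to\gamma_{i}(\mm)\bigr)\to \Ker\bigl([-,-]\colon\gamma_{i-1}(\mm)\wedge\mm\to\gamma_{i}(\mm)\bigr)\to 0,
\]
which disposes of the quadratic-functor part using that $\gamma_{i-1}(\mm)/\gamma_{i}(\mm)$ is finite-dimensional, and then Ellis's exact sequence (Theorems 34 and 35(iii) of \cite{El0})
\[
H_3\bigl(\mm/\gamma_{i-1}(\mm)\bigr)\to \Ker\bigl([-,-]\colon\mm\wedge\gamma_{i-1}(\mm)\to\gamma_{i}(\mm)\bigr)\to H_2(\mm),
\]
whose outer terms are finite-dimensional because $\mm/\gamma_{i-1}(\mm)$ is finite-dimensional and $\mm$ is finitely presented. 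Your instinct about which finiteness inputs are relevant ($H_2(\mm)$, the homology of the nilpotent quotient, the coinvariants $\gamma_{i-1}(\mm)/\gamma_i(\mm)$) is exactly right, but without citing or proving an exact sequence of this type the argument does not close.
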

\begin{proof}
	First we check that $\Ker (\nu_2)$ is finitely presented. Let $\mm \wedge \mm$ denote the non-abelian exterior product of $\mm$, which is the quotient of $\mm \otimes \mm$
	over the ideal generated by all the elements of the form $m\otimes m$ where $m\in \mm$. Let $m\wedge m'$ denote the class of $m\otimes m'$ in $\mm \wedge \mm$. Consider the morphism ${\mm \wedge \mm \to \mm \colon m\wedge m' \mapsto [m, m']}$. Its kernel is isomorphic to $H_2(\mm)$---the second homology of $\mm$ with trivial coefficients~\cite{Ellis1987}. Since $\mm$ is finitely presented, the Hopf formula implies that the $\K$-vector space $H_2(\mm)$ is finite-dimensional. On the other hand, by \cite{El0} we have an exact sequence
	$$
		\Gamma(\mm^{\ab}) \to \mm \otimes \mm \to \mm \wedge \mm \to 0,
	$$
	where $\Gamma$ denotes Whitehead's universal quadratic functor \cite{El0}. Since $\mm^{\ab}$ is finite-dimensional, $\Gamma(\mm^{\ab})$ is also finite-dimensional~\cite{SimsonTyc}. The exact sequence
	$$
		\Gamma(\mm^{\ab}) \to \Ker \bigl( \nu_{2}\colon \mm \otimes \mm \to \mm \bigr) \to H_2(\mm) \to 0
	$$
	now implies that $\Ker (\nu_2)$ is finite-dimensional.
	
	We proceed by induction on $i$, assuming that the claim is true for~$\nu_i$. Since the tensor product is a right exact functor, the extension of Lie algebras
	$$
		1 \to \Ker (\nu_i) \to \mm^{\otimes i} \to \gamma_i(\mm) \to 0
	$$
	yields the exact sequence
	$$
		\Ker( \nu_i)\otimes \mm \to \mm^{\otimes i}\otimes \mm \to \gamma_i(\mm) \otimes\mm \to 0.
	$$
	Denote the image of the homomorphism $\Ker( \nu_i)\otimes \mm \to \mm^{\otimes i}\otimes \mm$ by $\sigma \bigl( \Ker( \nu_i)\otimes \mm \bigr)$. Then we have the commutative diagram with exact rows
	\[
		\xymatrix{
		0 \ar[r] & \sigma \bigl( \Ker( \nu_i)\otimes \mm \bigr) \ar[d]_-{ } \ar[r]^-{ } & \mm^{\otimes i}\otimes \mm
		\ar[d]_-{\nu_{i+1}} \ar[r]^-{ } & \gamma_i(\mm)\otimes \mm \ar[d]^-{[-,-]} \ar[r] & 0 \\
		0 \ar[r] & 0 \ar[r]_-{} & \gamma_{i+1}(\mm) \ar[r]_-{1_{\gamma_{i+1}(\mm)}} & \gamma_{i+1}(\mm) \ar[r] & 0,
		}
	\]
	which yields the exact sequence
	$$
		0 \to \sigma ( \Ker( \nu_{i}) \otimes \mm) \to \Ker( \nu_{i+1}) \to
		\Ker \bigl( [-,-]\colon \gamma_n(\mm)\otimes \mm \to \gamma_{n+1}(\mm)\bigr) \to 0,
	$$
	where $[-,-]\colon \gamma_n(\mm)\otimes \mm \to \gamma_{n+1}(\mm)\colon\omega \otimes m \mapsto [\omega, m]$. We will show that the first and the last term in this exact sequence are finite-dimensional.
	
	We first prove that $\Ker(\nu_{i}) \otimes \mm$ is finite-dimensional.
	Let $v_1$, \dots, $v_k\in \mm$ be generators of the Lie algebra $\mm$.
	By the definition $\Ker(\nu_{i})\otimes \mm$
	is a quotient of $\Ker(\nu_{i})\otimes_{\K} \mm$. Using one of the defining relations, we have
	$$
		n\otimes [m, m']=[m',n]\otimes m-[m, n]\otimes m' ,
	$$
	for each $n\in \Ker(\nu_{i})$, $m\in \mm$. Therefore, as a vector space $\Ker(\nu_{i}) \otimes \mm$ will be a quotient of $\Ker(\nu_{i}) \otimes_\K V$, where $V$ is the subspace of $\mm$ generated by $v_1$, \dots, $v_k$. By the induction hypothesis $\Ker(\nu_{i})$ is finite-dimensional. Hence, $\Ker(\nu_{i}) \otimes_\K V$, and \emph{a fortiori} $\Ker(\nu_{i}) \otimes \mm$, is finite-dimensional.
	
	Now we show that $\Ker \bigl( [-,-]\colon \gamma_i(\mm)\otimes \mm \to \gamma_{i+1}(\mm)\bigr)$
	is finite-dimensional. By \cite{El0}, we have an exact sequence,
	$$
		\Gamma \big(\gamma_i(\mm)/\gamma_{i+1}(\mm)\big)\to \gamma_i(\mm)\otimes \mm 
		\to \gamma_i(\mm)\wedge \mm \to 0,
	$$
	which in turn entails the exact sequence
	\begin{align*}
		\Gamma \big(\gamma_i(\mm)/\gamma_{i+1}(\mm)\big) & \to \Ker \bigl( [-,-]\colon \gamma_i(\mm)\otimes \mm \to \gamma_{i+1}(\mm)\bigr)       \\
		                                                 & \to \Ker \bigl([-,-]\colon \gamma_i(\mm) \wedge \mm \to \gamma_{i+1}(\mm)\bigr) \to 0
	\end{align*}
	where $\Gamma$ denotes the universal quadratic functor. Since $\mm$ is finitely generated,
	$\gamma_i(\mm)/\gamma_{i+1}(\mm)$ will be finite-dimensional. Therefore,
	$\Gamma \big(\gamma_i(\mm)/\gamma_{i+1}(\mm)\big)$ is finite-dimensional.
	Thus, it suffices to show that $\Ker \bigl( [-,-]\colon \gamma_i(\mm)\wedge \mm \to \gamma_{i+1}(\mm)\bigr)$
	is finite-dimensional. For the latter, we will use the exact sequence
	$$
		H_3\bigl(\mm /\gamma_i(\mm)\bigr)\to
		\Ker \bigl( [-,-]\colon \mm \wedge \gamma_i(\mm)\to \gamma_{i+1}(\mm)\bigr) \to H_2(\mm),
	$$
	which follows from a combination of Theorem 34 and Theorem 35 (iii) in~\cite{El0}. We have already pointed out that $H_2(\mm)$ is finite-dimensional. Moreover, since $\mm /\gamma_i(\mm)$ is finite-dimensional, $H_3\big(\mm /\gamma_i(\mm)\big)$ is also finite-dimensional---use the Chevalley--Eilenberg chain complex of $\mm /\gamma_i(\mm)$. This finishes the proof.
\end{proof}

\begin{theorem}\label{ThmTensorGammaFinPres}
	Let $\mm$ be finitely presented Lie algebra and $i\geq 2$. Then $\mm^{\otimes i}$ is finitely presented if and only if $\gamma_i(\mm)$ is finitely presented.
\end{theorem}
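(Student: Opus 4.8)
The plan is to deduce this from Theorem~\ref{theorem4.1}, which already carries out the substantial work. Since $\mm$ is finitely presented and $i\geq 2$, Theorem~\ref{theorem4.1} tells us that the kernel of $\nu_i\colon \mm^{\otimes i}\to \gamma_i(\mm)$ is finite-dimensional over $\K$. We thus have a short exact sequence of Lie algebras
$$
0\to \Ker(\nu_i)\to \mm^{\otimes i}\to \gamma_i(\mm)\to 0
$$
whose kernel is finite-dimensional, hence in particular finitely presented: a finite-dimensional Lie algebra is presented by a finite basis together with its finitely many structure-constant relations. The theorem therefore follows from the general principle that, in a short exact sequence $0\to \rr\to \ll\to \hh\to 0$ of Lie algebras with $\rr$ finite-dimensional, $\ll$ is finitely presented if and only if $\hh$ is. I would isolate this as a lemma and prove its two directions separately.

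For the direction ``$\ll$ finitely presented $\Rightarrow$ $\hh$ finitely presented'', note first that $\hh$, being a quotient of the finitely generated algebra $\ll$, is finitely generated. Write $\ll=F/S$ with $F$ a free Lie algebra on a finite set and $S$ an ideal generated by finitely many elements, and let $T\supseteq S$ be the preimage of $\rr$ in $F$, so that $\hh\cong F/T$. Because $\rr$ is finite-dimensional it is finitely generated as an ideal of $\ll$; lifting a finite set of such ideal generators to $F$ and adjoining them to the generators of $S$ yields a finite set generating $T$ as an ideal. Hence $\hh=F/T$ is finitely presented.

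For the converse ``$\hh$ finitely presented $\Rightarrow$ $\ll$ finitely presented'', I would build an explicit finite presentation of $\ll$. Lifting a finite generating set of $\hh$ and adjoining a finite basis $Y$ of $\rr$ produces a finite generating set of $\ll$, hence a surjection $\pi\colon F\to \ll$ from the free Lie algebra $F$ on $X\sqcup Y$, where $X$ maps to the chosen lifts. I claim the kernel of $\pi$ is generated as an ideal by finitely many relations of three types: (a) the finitely many structure-constant relations of $\rr$ expressed in the basis $Y$; (b) for each of the finitely many ideal generators $r$ of the relation ideal of $\hh$, the relation $r-w_r$, where $w_r\in \mathrm{span}(Y)$ is the image in $\rr$ of the $\pi$-lift of $r$; and (c) for each $x\in X$ and $y\in Y$, the relation $[x,y]-v_{x,y}$, where $v_{x,y}\in \mathrm{span}(Y)$ records that $[x,y]\in \rr$ since $\rr$ is an ideal. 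To verify these suffice, I would show that in $F/I$, where $I$ is the ideal they generate, the span of $Y$ is an ideal isomorphic to $\rr$: relations (a) and (c) make it closed under all brackets and collapse it onto $\rr$, while the surjection $F/I\to \ll$ forces the isomorphism by a dimension count. Relations (b) then recover the presentation of $\hh$ on the quotient by this ideal, and the short five lemma identifies $F/I$ with $\ll$.

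The main obstacle is this last verification: establishing that the three finite families of relations generate the whole kernel as an ideal, that is, that no further relation is forced by iterated brackets. This is the Lie-algebra analogue of the classical fact that an extension of a finitely presented group by a finitely presented group is finitely presented, and the bookkeeping---propagating relations (a) and (c) through the Jacobi identity to show that $\mathrm{span}(Y)$ is an ideal of $F/I$---is where the real care is needed. Everything else is formal, the genuinely hard content having already been absorbed into Theorem~\ref{theorem4.1}.
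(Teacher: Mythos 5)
Your proposal is correct and follows exactly the paper's route: the same short exact sequence $0\to \Ker(\nu_i)\to \mm^{\otimes i}\to \gamma_i(\mm)\to 0$ with finite-dimensional kernel supplied by Theorem~\ref{theorem4.1}, followed by the principle that finite presentability passes between the middle and right terms of an extension with finite-dimensional kernel. The only difference is that the paper asserts this last principle without proof, whereas you sketch a (correct, standard) argument for both directions; that extra detail is sound and does not change the approach.
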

\begin{proof} Consider the following extension of Lie algebras:
	$$
		0 \to \Ker (\nu_i) \to \mm^{\otimes i} \to \gamma_i(\mm) \to 0.
	$$
	By the previous theorem, the first term $\Ker (\nu_i)$ is finite-dimensional. Therefore, the middle term $\mm^{\otimes i}$ is finitely presented if and only if $\gamma_i(\mm)$ is
	finitely presented.
\end{proof}

\begin{theorem}\label{Thm4.3}
	Let $\mm$ be a finitely presented Lie algebra and $i$ a natural number. Then the following are equivalent:
	\begin{tfae}
		\item $\gamma_{i+1}(\mm)$ is finitely presented;
		\item there exists an $i$-central extension $0 \to \rr \to \ll \to \mm \to 0$ of Lie algebras, where $\gamma_{i+1}(\ll)$ is finitely presented;
		\item for every $i$-central extension $0 \to \rr \to \ll \to \mm \to 0$ of Lie algebras, $\gamma_{i+1}(\ll)$ is finitely presented.
	\end{tfae}
\end{theorem}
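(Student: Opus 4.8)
The plan is to follow the template of Theorem~\ref{theorem3.11}, but to work harder at each step. That proof exploited the fact that a quotient of a finitely generated algebra is again finitely generated; this fails for finitely presented algebras, so I need a finiteness input strong enough to repair the argument. The replacement tool is Theorem~\ref{theorem4.1}: for $\mm$ finitely presented and $i\geq 1$, the kernel of $\nu^\mm_{i+1}\colon \mm^{\otimes i+1}\to \gamma_{i+1}(\mm)$ is finite-dimensional, combined with the standard fact that a quotient of a finitely presented Lie algebra by a finite-dimensional ideal is again finitely presented. The case $i=0$ is degenerate, since $0$-centrality forces $\rr=0$ and hence $\ll\cong\mm$, so that all three statements collapse to the hypothesis that $\mm$ is finitely presented; I therefore assume $i\geq 1$. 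I would then prove the cycle (i)$\Rightarrow$(iii)$\Rightarrow$(ii)$\Rightarrow$(i).

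The heart of the matter is a structural observation valid for an \emph{arbitrary} $i$-central extension $0\to\rr\to\ll\to\mm\to 0$ with $\mm$ finitely presented. Exactly as in Theorem~\ref{theorem3.11}, right-exactness of the tensor product gives a short exact sequence $0\to \widetilde{\ll^{\otimes i+1}}\to \ll^{\otimes i+1}\xrightarrow{q}\mm^{\otimes i+1}\to 0$, and $i$-centrality yields $\nu^\ll_{i+1}(\widetilde{\ll^{\otimes i+1}})=0$, so that $\nu^\ll_{i+1}$ factors through $q$ as a surjection $\pi\colon \mm^{\otimes i+1}\twoheadrightarrow \gamma_{i+1}(\ll)$. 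By naturality of $\nu$, composing $\pi$ with the canonical surjection $p\colon \gamma_{i+1}(\ll)\twoheadrightarrow\gamma_{i+1}(\mm)$ recovers $\nu^\mm_{i+1}$, since $p\circ\pi\circ q=p\circ\nu^\ll_{i+1}=\nu^\mm_{i+1}\circ q$ and $q$ is surjective. Consequently $\Ker(\pi)\subseteq \Ker(\nu^\mm_{i+1})$, which is finite-dimensional by Theorem~\ref{theorem4.1}; thus $\Ker(\pi)$ is finite-dimensional, and because $\pi$ is surjective, $\Ker(p)=\pi(\Ker(\nu^\mm_{i+1}))$ is finite-dimensional as well.

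From this the three implications read off directly. For (i)$\Rightarrow$(iii): if $\gamma_{i+1}(\mm)$ is finitely presented, then $\mm^{\otimes i+1}$ is finitely presented by Theorem~\ref{ThmTensorGammaFinPres}, so $\gamma_{i+1}(\ll)\cong \mm^{\otimes i+1}/\Ker(\pi)$ is a quotient of a finitely presented algebra by a finite-dimensional ideal, hence finitely presented. For (iii)$\Rightarrow$(ii): apply (iii) to the trivial ($i$-central) extension $0\to 0\to\mm\to\mm\to 0$, for which $\gamma_{i+1}(\ll)=\gamma_{i+1}(\mm)$. For (ii)$\Rightarrow$(i): the chosen extension makes $\gamma_{i+1}(\ll)$ finitely presented, and $\gamma_{i+1}(\mm)\cong\gamma_{i+1}(\ll)/\Ker(p)$ is again a quotient by a finite-dimensional ideal, hence finitely presented.

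I expect the only genuine obstacle to be conceptual rather than computational: recognizing that the finitely generated proof does not transfer verbatim and isolating the correct finiteness hypothesis. Once Theorem~\ref{theorem4.1} is invoked, the whole argument reduces to the bookkeeping above together with the lemma on quotients by finite-dimensional ideals. The one point demanding care is the identity $p\circ\pi=\nu^\mm_{i+1}$ and the resulting nesting $\Ker(\pi)\subseteq\Ker(\nu^\mm_{i+1})$, as this single commutativity simultaneously controls both $\Ker(\pi)$ (needed for (i)$\Rightarrow$(iii)) and $\Ker(p)$ (needed for (ii)$\Rightarrow$(i)).
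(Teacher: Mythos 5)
Your proposal is correct and follows essentially the same route as the paper: the same right-exactness short exact sequence, the same factorization of $\nu^\ll_{i+1}$ through $\mm^{\otimes i+1}$, the same kernel containment $\Ker(\pi)\subseteq\Ker(\nu^\mm_{i+1})$ controlled by Theorem~\ref{theorem4.1}, and Theorem~\ref{ThmTensorGammaFinPres} to link back to $\gamma_{i+1}(\mm)$. Your only deviations are cosmetic --- handling (ii)$\Rightarrow$(i) directly via the finite-dimensionality of $\Ker(p)$ rather than passing back through $\mm^{\otimes i+1}$, and explicitly noting the degenerate case $i=0$ --- both of which are sound.
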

\begin{proof}
	As in Theorem \ref{theorem3.11}, we have the short exact sequence of Lie algebras
	$$
		0 \to \widetilde{\ll^{\otimes i+1}} \to \ll^{\otimes i+1} \to \mm^{\otimes i+1} \to 0,
	$$
	where $\widetilde{\ll^{\otimes i+1}}$ is the ideal of $\ll^{\otimes i+1}$ generated by all the elements
	$${(\cdots ((l_1\otimes l_2)\otimes l_3)\otimes \cdots )\otimes l_{i+1}}$$ such that $l_s\in \rr$ for at least one
	$s$. Since $\nu^\ll_{i+1}\big( \widetilde{\ll^{\otimes i+1}} \big) = 0$, the homomorphism $\nu^\ll_{i+1}\colon {\ll^{\otimes i+1}\to \gamma_{i+1}(\ll)}$ factors over the cokernel $\ll^{\otimes i+1} \to \mm^{\otimes i+1}$ and we obtain the dotted morphism $\mm^{\otimes i+1} \to \gamma_{i+1}(\ll)$ making the following diagram commute.
	\[
		\xymatrix{
		& \mm^{\otimes i+1} \ar@{.>}[d]_-{} \ar@{=}[r]^-{ } & \mm^{\otimes i+1} \ar[d]^-{\nu^\mm_{i+1}} \\
		& \gamma_{i+1}(\ll) \ar[r]_-{} & \gamma_{i+1}(\mm)
		}
	\]
	This implies that $\Ker \bigl( \mm^{\otimes i+1} \to \gamma_{i+1}(\ll)\bigr) \subseteq \Ker(\nu^\mm_{i+1})$.
	By Theorem~\ref{theorem4.1}, the Lie algebra $\Ker(\nu^\mm_{i+1})$ is finite-dimensional; hence so is $\Ker \bigl( \mm^{\otimes i+1} \to \gamma_{i+1}(\ll)\bigr)$. It follows that $\mm^{\otimes i+1}$ is finitely presented if and only if $\gamma_{i+1}(\ll)$ is finitely presented. On the other hand, by the previous theorem, $\mm^{\otimes i+1}$ is finitely presented if and only if $\gamma_{i+1}(\mm)$ is finitely presented.
\end{proof}


\providecommand{\noopsort}[1]{}
\providecommand{\bysame}{\leavevmode\hbox to3em{\hrulefill}\thinspace}
\providecommand{\MR}{\relax\ifhmode\unskip\space\fi MR }
\providecommand{\MRhref}[2]{%
	\href{http://www.ams.org/mathscinet-getitem?mr=#1}{#2}
}
\providecommand{\href}[2]{#2}

\end{document}